\newcommand{\nrn}{\rightarrow+\infty}
\newcommand{\xrn}{\xrightarrow}
\newcommand{\ER}{\mathbb {R}}\newcommand{\EN}{\mathbb {N}}
\newcommand{\PE}{\mathbb {P}}
\newcommand{\ES}{\mathbb{E}}
\newcommand{\guil}{\textquotedblleft}
\newcommand{\psg}{\langle }
\newcommand{\psd}{\rangle }
\newcommand{\bx}{\bar{X}}
\newcommand{\bz}{\Sigma}
\newcommand{\Gn}{{n\gamma}}
\newcommand{\Gnp}{{(n+1)\gamma}}
\newtheorem{theorem}{ \textnormal{\bf{T\scriptsize{HEOREM}}}}
\newtheorem{prop}{\textnormal{\bf{P\scriptsize{ROPOSITION}}}}
\newtheorem{Corollaire}{\textnormal{\bf{C\scriptsize{OROLLARY}}}}
\newtheorem{lemme}{\textnormal{\bf{L\scriptsize{EMMA}}}}
\theoremstyle{definition}
\newtheorem{definition}{\textnormal{\bf{D}\scriptsize{EFINITION}}}
\theoremstyle{remark}
\newtheorem{Remarque}{\textnormal{\bf{R\scriptsize{EMARK}}}}
\author{Serge Cohen\footnote{ Institut de Mathématiques de Toulouse, Université de Toulouse, 118 route de Narbonne F-31062 Toulouse Cedex  9 E-mail: \texttt{serge.cohen@math.univ-toulouse.fr}},
Fabien Panloup\footnote{Laboratoire de Statistiques et Probabilités, Université de Toulouse$\&$INSA Toulouse, 135, Avenue de Rangueil, 31077 Toulouse Cedex 4, E-mail: \texttt{fabien.panloup@.math.univ-toulouse.fr}}
}
\title{\textbf{ Approximation of stationary solutions of Gaussian driven
    Stochastic Differential Equations}}
\begin{document}
\maketitle
\begin{abstract}	
We study sequences of empirical measures of Euler schemes  associated to some non-Markovian SDEs: SDEs driven by Gaussian processes with stationary increments. We obtain the functional convergence of this sequence to a stationary solution to the SDE. Then, we end the paper by some specific properties of this stationary solution. We show that, in contrast to Markovian SDEs, its initial random value and the driving Gaussian process are always dependent. However, under an integral representation assumption, we also obtain that the past of the solution is independent to the future of the underlying innovation process of the Gaussian driving process.
\end{abstract}
\noindent \textit{Keywords}: stochastic differential equation; Gaussian process; stationary process;  Euler scheme.

\noindent \textit{AMS classification (2000)}: 60G10, 60G15, 60H35.
\section{Introduction}
\noindent The study of steady state of dynamical systems  is very important for many experimental
sciences like Physics, Chemistry, or Biology, since very often
measure can only be obtained in that regime. In the Markovian 
setting the study of long time behavior and stationary solutions of dynamical 
systems is a classical domain of both Mathematic and Probability. 
Nevertheless in many situations the driving noise of the dynamical 
system has long range dependence properties and the solution is
not Markovian.  

In this paper, we deal   with an  $\ER^d$-valued process $(X_t)_{t\ge0}$
solution to the SDE 
of the following form:
\begin{equation}\label{fractionalSDE0}
dX_t=b(X_t)dt+ dZ_t
\end{equation}
where $(Z_t)_{t\ge0}$ is  a  continuous centered
Gaussian process with ergodic  stationary increments. For this class of SDEs, our principal aim is to approximate some stationary solutions under some mean-reverting assumptions on $b$ and 
weak assumptions on $(Z_t)_{t\ge0}$  including ergodicity of the discrete increments that will be made precise in the next section. Note that, since for any matrix $\sigma$, $(\tilde{Z}_t)_{t\ge0}=(\sigma Z_t)_{t\ge0}$ is also a continuous centered Gaussian process  with stationary ergodic increments, SDEs of type \eqref{fractionalSDE0} include the following ones: $dX_t= b(X_t)dt+\sigma dZ_t$. However, we can remark the main restriction: we do not consider the case where $\sigma$ is not constant. It allows us on one
hand to avoid technicalities related to stochastic integration and on
the other hand to generalize some results of~\cite{hairer}, when 
the driving noise is not a fractional Brownian motion. Please note that, when $b(x)=-  x$, the solution of
\eqref{fractionalSDE0} is an Orstein-Uhlenbeck type process, where the driving process
may be more general than 
a fractional Brownian motion (see \cite{cheridito} for a study of fractional Ornstein-Uhlenbeck processes). We obtain bounds for a discrete version of this generalized Ornstein-Uhlenbeck process, which are an important tool in our proofs and which may have interest
of their own (see Lemma \ref{lemme2}).  \\
In this work, our approach is quite different to that of ~\cite{hairer}. Actually, we choose to first approximate stationary solutions of an ergodic discrete model associated with \eqref{fractionalSDE0}. Then, stationary solutions of the SDE are exhibited as limits of these stationary solutions. More precisely, in a first step, we study a sequence of  functional empirical occupation measures of an Euler scheme $(\bar{X}_{\Gn})$ with step $\gamma>0$ associated with ~\eqref{fractionalSDE0} and show under some mean-reverting assumptions on $b$, that, when $n\rightarrow+\infty$, this sequence has almost surely ($a.s.$ later on) some weak convergence properties to the distribution of a stationary Euler scheme with step $\gamma$ of the SDE. 
Denoting these stationary solutions by $Y^{(\infty,\gamma)}$, we show in a second step, that $(Y^{(\infty,\gamma)})_{\gamma}$ is tight for the uniform convergence on compact sets and that its weak limits  (when $\gamma\rightarrow0$) are stationary solutions to \eqref{fractionalSDE0}.

For a Markovian SDE, this type of approach is used as a way of numerical approximation of the invariant distribution and more generally of the distribution of the Markov process when stationary (see \cite{talay}, \cite{LP1}, \cite{LP2}, \cite{lemaire2}, \cite{panloup1}, \cite{PP1}).  Here, even if the discrete model can  be simulated, we essentially use it as a natural way of construction of stationary solutions of the continuous model and the computation problems are out of the scope of this paper. \\
\noindent In Section \ref{mainresult}, we make the mathematical framework precise  and we state our main results 
of convergence   to the stationary regime of SDE \eqref{fractionalSDE0}. Then, Sections \ref{tightness1}, \ref{identification1} and \ref{identification2} are devoted to the proof of the main results. First, in Sections \ref{tightness1} and \ref{identification1}, we study  the long time behavior of the sequence ${(\bar{X}_{\Gn})}_{n \ge 1}$
 (when $\gamma$ is fixed) and the convergence properties (when $n\rightarrow+\infty$) of the sequence of functional empirical occupation measures of the continuous-time Euler scheme. We show that this sequence is $a.s.$ tight for the topology of uniform convergence on compact sets and that 
its weak limits are  stationary solutions to the ``discretized'' SDE. Second, in Section \ref{identification2}, we focus on the behavior of these weak limits when $\gamma\rightarrow0$. In Section \ref{sect-properties}, we give some properties of the stationary solution. We first show that the initial random value and the driving process are dependent as soon as the Gaussian process has dependent increments. However, assuming some integral  representation of  $Z$ (with respect to a white noise), we then prove that the past of the stationary solution that are built with our method, are independent to the future of the underlying innovation process of $Z$. Please note that a similar result is also proven in the discrete case for the stationary Euler scheme associated with the SDE. Section~\ref{appendix} is an Appendix where we obtain some control of the moment of the supremum of a Gaussian process 
and a technical Lemma showing 
that we can realize the stationary solutions with the help of an innovation representation 
of the driving process.

\section{Framework and main results}\label{mainresult}
Before outlining the sequel of the paper, we list some
 notations. 
 Throughout this paper, $ \ER_+=[0,\infty)$. We denote by ${\cal
 C}(\ER_+,\ER^d)$ (resp. $\mathbb{D}(\ER_+,\ER^d)$) the space of
 continuous (resp. càdlàg functions) endowed with the  uniform 
 convergence on compact sets (resp. Skorokhod (see $e.g.$ \cite{billingsley})) topology, and  by  ${\cal C}^k(\ER_+,\ER^d),$  the set of $k$th differentiable functions. 
 The Euclidean norm is denoted by $|\, .\,|$. For a measure $\mu$ and a $\mu$-measurable function f, we set $\mu(f)=\int fd\mu$.
 Finally, we will denote by $C$  every non explicit positive constant. In particular, it can change from line to line.\\
Let us first consider assumptions for the driving noise $ (Z_t)_{t\ge0}=(Z^1_t,\ldots,Z^d_t)_{t\ge0}:$ we assume that $(Z_t)_{t\ge0}$ is a centered Gaussian process satisfying $Z_0=0$  and, for every $i\in\{1,\ldots,d\}$, 
we denote by ${c}_i:\ER\rightarrow\ER_+$, the following function of $(Z^i_t)_{t\ge0}$: for every positive $s,t$,
$$\ES[(Z^i_t-Z^{i}_s)^2]={c}_i(t-s).$$
Note that ${c}_i(0)=0$. For every integer $n\ge0$, let us denote by $\Delta_n=Z_\Gn-Z_{(n-1)\gamma} $ when $\gamma > 0$ is fixed.
Setting $\phi_\gamma^i(n):=\ES[\Delta^i_1\Delta^i_{n+1}]$ for $i=1,\ldots,l$, we have:
\begin{equation}\label{covariance}
\phi_\gamma^i(n)=\frac{1}{2}\left[{c}_i((n+1)\gamma)-2{c}_i(n\gamma)+{c}_i((n-1)\gamma)\right].
\end{equation}
We denote by $(\bar{Z}_t)_{t\ge0}$ the ``discretized'' Gaussian process defined by $\bar{Z}_{n\gamma}:=Z_{n\gamma}$ for every $n\ge0$ and,
$$\bar{Z}_{t}=\bar{Z}_{n\gamma}\quad\forall t\in[n\gamma,(n+1)\gamma).$$
We introduce  assumption $\mathbf{(H_1)}$ on the functions ${c}_i$, $i\in\{1,\ldots,{d}\}$. More precisely, we impose some conditions on the second derivative of ${c}_i$ near 0 and $+\infty$ which correspond respectively to  some conditions on the local behavior and on the memory of the process. \\

\noindent $\mathbf{(H_1)}$ For every $i\in\{1,\ldots,{d}\}$, ${c}_i$ is continuous on $\ER_+$ and  ${\cal C}^2$ on $(0,+\infty)$. Moreover,  there exist $a_i\in(0,2)$ and $b_i>0$ such that:
\begin{equation}\label{cov-bounds}
|{c}_i''(t)|\le
\begin{cases}
C t^{-a_i}&\forall t\in(0,1)\\
C t^{-b_i}&\forall t\ge1.
\end{cases}
\end{equation}
Let us recall that for a fractional Brownian motion with Hurst index $H$, these assumptions are satisfied with $a_i=b_i=2-2H$. One can also check that  ~\eqref{cov-bounds} implies that in a neighborhood of $0$,
\begin{equation}\label{consequence-cov-bounds}
c_i(t)\le C\begin{cases} t &\textnormal{if $a_i\in(0,1)$,}\\
t\ln t&\textnormal{if $a_i=1$,}\\
t^{2-a_i}&\textnormal{if $a_i\in(1,2)$}.
\end{cases}
\end{equation}
In particular, the sample paths of $ (Z_t)_{t\ge0} $ are almost surely continuous.
Futhermore, we derive from assumption $\mathbf{(H_1)}$ that for every $i\in\{1,\ldots,{d}\}$,  
$\ES[\Delta_1^i\Delta_n^i]\rightarrow0$ as $n\rightarrow+\infty$. Then, it follows from \cite{cornfeld} that $(\Delta_n)_{n\ge1}$ is an ergodic sequence, $i.e.$ that for every $F:(\ER^d)^{\mathbb{N}}\rightarrow\ER$ such that $\ES[|F((\Delta_n)_{n\ge1})|]<+\infty$,
\begin{equation}\label{ergodicite-increments}
\frac{1}{n}\sum_{k=1}^n F((\Delta_k)_{k\ge n})\xrn{n\nrn}\ES[F((\Delta_n)_{n\ge1})].
\end{equation}

\noindent Let us now introduce  some stability assumptions $\mathbf{(H_2)}$ and $\mathbf{(H_3)}$  concerning the stochastic differential equation
\begin{equation}\label{fractionalSDE}
dX_t=b(X_t)dt+ dZ_t,
\end{equation}
where $b:\ER^d\rightarrow\ER^d$ is a continuous function.\\
\newpage
 \noindent $\mathbf{(H_2)}$:
 \begin{itemize}
 \item[(i)] There exists $C>0$ such that $|b(x)|\le C(1+|x|)\quad \forall x\in\ER^d.$
 \item[(ii)] There exist $\beta\in\ER$ and $\alpha>0$ such that 
 $$\psg x,b(x)\psd\le  \beta-\alpha |x|^2.$$
 \end{itemize}
\noindent  $\mathbf{(H_3)}$: $b$ is a Lipschitz continuous function and there
 exist $\alpha>0$ and $\beta \ge 0$, such that $\forall x,y\in\ER^d$,
 $$\psg b(x)-b(y),x-y\psd\le \beta-\alpha |x-y|^2.$$
 When $\mathbf{(H_3)}$ holds for $\beta=0$, we will denote it by $\mathbf{(H_{3,0})}.$ 
 \begin{Remarque} 
 The reader can check that $\mathbf{(H_3)}$ for  some  $\alpha > 0 , \beta\ge0 $ implies 
$\mathbf{(H_2)} (ii)$ for  some  $0 < \alpha' <\alpha $ and $\beta'\ge\beta$ by taking $y=0$ in $\mathbf{(H_3)}.$ 
As well, the fact that $b$ is Lipschitz continuous implies $\mathbf{(H_3)}(i).$ One may argue that the Lipschitz assumption 
is too strong for our purpose but we chose to keep this assumption for two reasons.
First, this assumption is only needed for the SDE in continuous time 
(i.e. the 2. of Theorem~\ref{principal}). Second it is a convenient way to have the sublinearity 
assumption $\mathbf{(H_2)}(i).$


 \end{Remarque}
\noindent When $b$ is a Lipschitz continuous function, it is obvious using  Picard iteration arguments that for any initial random variable $\xi$ 
a.s. finite there exists a unique  solution $(X_t)_{t\ge0}$ to~\eqref{fractionalSDE} such that $X_0=\xi $ which is adapted to the filtration $\sigma(\xi,Z_s, 0\le  s \le t).$ Then,
\begin{equation}\label{eq-idsbis}
 X_t= \xi + \int_0^t b(X_s) ds +  Z_t, \quad \forall t >0.
 \end{equation}
\noindent Please note that the integral in~\eqref{eq-idsbis} is always defined since the sample
paths of $ (X_t)_{t\ge0} $ and $ (Z_t)_{t\ge0} $ are continuous.  \\

\noindent Let us now define a stationary solution to~\eqref{fractionalSDE}.
\begin{definition} \label{stat-sol}
Let  $b:\ER^d\rightarrow\ER^d$ be a continuous function. We say that $(X_t)_{t\ge0}$ is a stationary solution to \eqref{fractionalSDE}
if
\begin{equation}\label{eq-ids}
(X_t - X_0 - \int_0^t b(X_s) ds)_{t \ge 0}  \overset{\cal L}{=} ( Z_t)_{t \ge 0}, 
\end{equation}
where the equality is the equality of all finite dimensional margins,
and if for every $n\in\EN$, for every $0\le t_1<t_2<\ldots<t_n$,
$$(X_{t+t_1},\ldots,X_{t+t_n})\overset{\cal L}{=}(X_{t_1},\ldots,X_{t_n})\quad\forall t\ge0,$$
where $\overset{\cal L}{=}$ denotes the equality in distribution.
\end{definition}

 \begin{Remarque}Since $ Z $ has in general no integral representation like the moving average
representation of the fractional Brownian motion 
$$ B_H(t)=\int_{-\infty}^{\infty} (t-s)_+^{H-\frac12} - (-s)_+ dW_s$$ 
we don't have any stationary noise process in the sense of Definition 2.6 in~\cite{hairer}. 
Actually, our definition is closer to the classical definition of invariant measure 
of Random Dynamical System (see~\cite{arnold,crauel}). 
 \end{Remarque}

\noindent When $ (Z_t)_{t\ge0} $ is a Markovian process, for instance a Brownian motion, it is classical 
to have $X_0$ independent of $Z,$ but in general we cannot have such independence 
as stated in Proposition~\ref{dependenceprop} (see Section \ref{sect-properties}). 
\begin{definition} \label{stat-sol-ini} Let $\nu$ denote a probability on $\ER^d$. We say that $\nu$ is an invariant distribution for \eqref{fractionalSDE0}
if there exists a stationary solution $(X_t)_{t\ge0}$ to \eqref{fractionalSDE0} such that 
$\nu={\cal L}(X_0)$.
 \end{definition}
 \begin{Remarque} The fact that $X_0$ and $(Z_t)_{t\ge0}$ may  be dependent involves that uniqueness of the invariant distribution does not imply uniqueness of stationary solutions to \eqref{eq-ids}.
 \end{Remarque}

\noindent Let $\gamma$ be a positive number. We will now discretize equation~\eqref{fractionalSDE} as follows:
\begin{equation*}
\begin{cases}
 Y_{(n+1)\gamma}-Y_{n\gamma}=\gamma b(Y_{n\gamma})+ \Delta_{n+1}\quad\forall n\ge0.\\
 Y_t=Y_{n\gamma}\quad\forall t\in[n\gamma,(n+1)\gamma).
\end{cases}\hspace{6.5cm}\mathbf{(E_\gamma)}
 \end{equation*}
We will say that $(Y_t)_{t\ge0}$ is a \textit{discretely} stationary solution  to~$\mathbf{(E_\gamma)}$ is solution of~$\mathbf{(E_\gamma)}$ satisfying:
$$ (Y_{t_1+k\gamma},\ldots,Y_{t_n+k\gamma})\overset{\cal L}{=}(Y_{t_1},\ldots,Y_{t_n})\quad\forall\, 0<t_1<\ldots<t_n,\forall n,k\in\EN.$$
\noindent We denote $(\bar{X}_{\Gn})$ the Euler scheme defined by:
 $\bx_0=x\in\ER^d$ and for every $n\ge0$
 \begin{equation}
   \label{eq:fractionalSDE-disc}
   \bx_{\Gnp}=\bx_\Gn+\gamma b(\bx_\Gn)+\Delta_{n+1}.
 \end{equation}
Then, we denote by $(\bx_t)_{t\ge0}$ the stepwise constant continuous-time Euler scheme defined by:
 $$\bx_t=\bx_{\Gn} \quad\forall t\in[\Gn,\Gnp).$$
The process $ (\bar{X}_t)_{t\ge0}$ is a solution to~$\mathbf{(E_\gamma)}$ such that $ \bar{X}_0=x.$
 For every $k\ge 0$, we define by $(\bar{X}_t^{(\gamma k)})_{t\ge 0}$ the
 $(\gamma k)$-shifted process: $\bar{X}_t^{(\gamma k)}=\bar{X}_{\gamma k+t}.$\\
 Then, a sequence of random
 probability measures $({\cal P}^{(n)}(\omega,d\alpha))_{n\ge 1}$ is defined  on the Skorokhod space
 $\mathbb{D}(\ER_+,\ER^d)$ by
 $${\cal P}^{(n,\gamma)}(\omega,d\alpha)=\frac{1}{n}\sum_{k=1}^n
 {\delta}_{{\bar{X}}^{({\gamma(k-1)})}(\omega)},
(d\alpha)$$ where 
 $\delta$ denotes the Dirac measure. 
For $t\ge 0$,
 the sequence $({\cal P}_t^{(n)}(\omega,dy))_{n\ge 1}$ of
 \guil marginal" empirical measures  at time $t$ on $\ER^d$ is defined by
 $${\cal P}^{(n,\gamma)}_t(\omega,dy)=\frac{1}{n}\sum_{k=1}^n
 {\delta}_{\bar{X}^{(\gamma(k-1))}_t(\omega)}(dy)=\frac{1}{n}\sum_{k=1}^n
 {\delta}_{\bar{X}_{\gamma(k-1)+t}(\omega)}(dy).$$

\noindent A weak limit of  a set ${\mathcal P} \subset \mathbb{D}(\ER_+,\ER^d)$ is a limit of any subsequence 
of $ {\mathcal P} $ in $\mathbb{D}(\ER_+,\ER^d).$ Let us now state the main results. 
 \begin{theorem}\label{principal} 1. Assume $\mathbf{(H_1)}$ and $\mathbf{(H_2)}$. Then, there exists $\gamma_0>0$ such that for every $\gamma\in(0,\gamma_0)$, $({\cal P}^{(n,\gamma)}(\omega,d\alpha))_{n\ge1}$ is $a.s.$ tight on $\mathbb{D}(\ER_+,\ER^d)$. Furthermore,
 every weak limit is a discretely stationary solution to~$\mathbf{(E_\gamma)}$.\\
 2.  Assume   $\mathbf{(H_1)}$ and $\mathbf{(H_3)}$ and set 
 $${\cal U}^{\infty,\gamma}(\omega):=\{\textnormal{weak limits of 
$({\cal P}^{(n,\gamma)}(\omega,d\alpha))$}\}.$$
 Then, there exists $\gamma_1\in(0,\gamma_0)$ such that $({\cal U}^{\infty,\gamma}(\omega))_{\gamma\le\gamma_1}$ is a.s. relatively compact for the uniform convergence topology on compact sets and any weak limit when $\gamma\rightarrow0$ of $({\cal U}^{\infty,\gamma}(\omega))_{\gamma\le\gamma_1}$ is a stationary solution to \eqref{fractionalSDE}. 
 \end{theorem}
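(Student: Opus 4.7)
The plan is to handle the two parts in sequence.

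For Part 1, I would first use the mean-reverting assumption $\mathbf{(H_2)}(ii)$ combined with the noise variance control from $\mathbf{(H_1)}$ (via~\eqref{consequence-cov-bounds}) to establish a uniform moment bound $\sup_n \ES[|\bx_\Gn|^p] < \infty$ for $\gamma$ small enough and all $p\ge 1$. Squaring the recursion yields a Lyapunov-type contraction on $\ES[|\bx_\Gnp|^2]$, and the dependence between $\Delta_{n+1}$ and the past is absorbed via a Cauchy--Schwarz bound on the cross term, with Lemma~\ref{lemme2} providing an OU-type pathwise majorant to upgrade from $L^2$ to $L^p$ for all $p$. Almost-sure tightness of $({\cal P}^{(n,\gamma)}(\omega,d\alpha))_{n\ge 1}$ on $\mathbb{D}(\ER_+,\ER^d)$ then follows by combining these moment bounds with the Birkhoff ergodic theorem applied to the shift on $(\ER^d)^\EN$ acting on the ergodic sequence $(\Delta_n)_{n\ge 1}$ (ergodicity being provided by~\eqref{ergodicite-increments}), since tightness on Skorokhod space for a piecewise-constant process reduces to controlling jump sizes $\gamma b(\bx_\Gn) + \Delta_{n+1}$.

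The identification of any weak limit as a discretely stationary solution to $\mathbf{(E_\gamma)}$ is then essentially automatic. For any bounded continuous functional $F$ on $\mathbb{D}(\ER_+,\ER^d)$, the telescoping $\frac{1}{n}\sum_{k=1}^n [F(\bx^{(\gamma k)}) - F(\bx^{(\gamma(k-1))})]$ tends to $0$ as $n\to\infty$, so any weak limit must be invariant under the time-$\gamma$ shift. The Euler identity $\alpha(\Gnp) - \alpha(\Gn) - \gamma b(\alpha(\Gn))$, viewed as a functional of $\alpha$, is continuous in the Skorokhod topology at continuity points of $\alpha$, and its joint distribution in $n$ matches that of $(\Delta_n)$ in the limit thanks to stationarity of the latter; this confirms that any weak limit solves $\mathbf{(E_\gamma)}$.

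For Part 2, fix a discretely stationary solution $Y^{(\infty,\gamma)}$. Under the stronger $\mathbf{(H_3)}$, comparing two solutions of $\mathbf{(E_\gamma)}$ driven by the same noise produces a near-contraction on $|Y_\Gn - Y'_\Gn|^2$ with geometric factor $1-2\alpha\gamma+O(\gamma^2)$, which together with Part~1 yields $\sup_{\gamma \le \gamma_1} \ES[|Y^{(\infty,\gamma)}_0|^p] < \infty$ for all $p \ge 1$. Relative compactness of $({\cal U}^{\infty,\gamma})_\gamma$ for uniform convergence on compact sets then reduces, via stationarity, to a uniform-in-$\gamma$ modulus-of-continuity estimate. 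This is where the Gaussian regularity from $\mathbf{(H_1)}$ enters, through~\eqref{consequence-cov-bounds} and the appendix bounds on suprema of Gaussian processes, which transfer the Hölder-type regularity of $Z$ to $Y^{(\infty,\gamma)}$ uniformly in $\gamma$.

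Finally, to identify a limit $X$ as a stationary solution to~\eqref{fractionalSDE}, I would write
\[
Y^{(\infty,\gamma)}_t - Y^{(\infty,\gamma)}_0 - \int_0^t b\bigl(Y^{(\infty,\gamma)}_{\lfloor s/\gamma\rfloor\gamma}\bigr)\,ds = \bar{Z}_t,
\]
so that as $\gamma \to 0$ the right-hand side converges to $Z$ uniformly on compacts, while the Lipschitz continuity of $b$ from $\mathbf{(H_3)}$ combined with uniform convergence of $Y^{(\infty,\gamma)}$ allows the drift integral to pass to $\int_0^t b(X_s)\,ds$, yielding~\eqref{eq-ids}. Continuous-time stationarity of the limit follows from discrete-$\gamma$ stationarity along a countable dense set of shift times, combined with sample-path continuity. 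The main obstacle is precisely the uniform-in-$\gamma$ modulus estimate for $Y^{(\infty,\gamma)}$: the accumulated drift contribution in the Euler scheme must not destroy the Hölder regularity inherited from $Z$, and it is the OU-type pathwise bounds of Lemma~\ref{lemme2} that keep this contribution controlled independently of $\gamma$.
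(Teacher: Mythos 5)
The central gap is the uniform-in-$\gamma$ moment bound for the stationary schemes, which is precisely the main difficulty of Part 2. You assert that the $\mathbf{(H_3)}$-contraction between two solutions of $\mathbf{(E_\gamma)}$ driven by the same noise, ``together with Part 1'', yields $\sup_{\gamma\le\gamma_1}\ES[|Y^{(\infty,\gamma)}_0|^p]<+\infty$. But the bounds coming out of Part 1 (the recursion of Lemma \ref{lemme3}) give, after summing the geometric series, a quantity of order $1+c(\gamma)/\gamma$, which is \emph{not} bounded as $\gamma\to0$ (for fractional Brownian motion with $H<1/2$, $c(\gamma)/\gamma=\gamma^{2H-1}\to+\infty$); moreover they are a.s.\ bounds on empirical averages for fixed $\gamma$, not moment bounds uniform in $\gamma$. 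The paper resolves this through Lemma \ref{lemme2}, which is not a ``pathwise majorant'': it is an explicit $L^2$ computation for the Ornstein--Uhlenbeck Euler scheme that exploits cancellations in the covariance $\phi^i_\gamma$ (resummation into $f_i^\gamma$ and comparison with $\int_0^{+\infty}e^{-t}(1+g_{i,2}(t))dt$) to get $\sup_{\gamma\le\gamma_0}v(\gamma)<+\infty$; one then compares $\bar{X}$ with $\Sigma$ under $\mathbf{(H_3)}$ (Step 1 of Lemma \ref{lemme4}) and finally $\bar{X}$ with $Y^{(\infty,\gamma)}$ driven by the same noise, using discrete stationarity (Step 2). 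Your use of Lemma \ref{lemme2} in Part 1 to ``upgrade from $L^2$ to $L^p$'' is both unsupported (the lemma says nothing about higher moments or pathwise domination) and unnecessary: Part 1 needs no moment bounds at all, only the pathwise recursion plus the ergodic theorem \eqref{ergodicite-increments} applied to $\frac1n\sum_{k\le n}|\Delta_k|^2$, and in $\mathbb{D}(\ER_+,\ER^d)$ no control of jump sizes is required since the modulus $w'_T(\bar{X}^{(k)},\delta)$ vanishes for $\delta<\gamma$.

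Two further steps would not go through as stated. In the identification of the weak limits of $({\cal P}^{(n,\gamma)})_{n}$, matching the law of the limiting noise with that of $\bar{Z}^\gamma$ cannot be justified ``thanks to stationarity'' alone: the a.s.\ limit of $\frac{1}{n}\sum_{m\le n} F\circ G((\Delta_l)_{l\ge m})$ equals the deterministic expectation $\ES[F(\bar{Z}^\gamma)]$ only because $(\Delta_n)$ is \emph{ergodic} (a consequence of $\mathbf{(H_1)}$ through the covariance decay); under mere stationarity the limit would be a conditional expectation given the invariant $\sigma$-field and the identification would fail. Similarly, in the final step your display treats $N^{(\infty,\gamma)}$ as the discretization of one fixed process $Z$ converging uniformly to $Z$; in fact $N^{(\infty,\gamma)}$ is only \emph{equal in law} to $\bar{Z}^\gamma$, with a driving process depending on $\gamma$ and $\omega$, so the conclusion must be drawn in distribution: after showing the remainder $R^{\gamma_n}_t=-\int_{\underline{t}_{\gamma_n}}^{t}b(Y^{(\infty,\gamma_n)}_s)ds$ is negligible, one gets $N^{(\infty,\gamma_n)}\Rightarrow\Phi(Y)$, while $N^{(\infty,\gamma_n)}\overset{\cal L}{=}\bar{Z}^{\gamma_n}$ converges to $Z$ in finite-dimensional distributions because these laws are Gaussian; this gives $\Phi(Y)\overset{\cal L}{=}Z$, i.e.\ \eqref{eq-ids}, not a pathwise identity.
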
 
 The previous theorem states existence of  stationary solutions of~\eqref{fractionalSDE}, but one can 
wonder about uniqueness of the solutions. We will only consider the special case when $\mathbf{(H_{3,0})}$  is enforced which is 
called in the Markovian setting  asymptotic confluence (By asymptotic confluence, we mean that the distance in probability between two solutions starting from two different points $x$ and $y$ tends to 0 when $t\rightarrow+\infty$). 

\begin{prop}
\label{uniq} Assume $\mathbf{(H_1)}$ and  $\mathbf{(H_{3,0})}$. Then, there exists a unique stationary solution to \eqref{fractionalSDE}   and to equation $\mathbf{(E_\gamma)}$, when $\gamma$ is small enough. 
\end{prop}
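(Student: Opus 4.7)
The plan is to obtain existence directly from Theorem~\ref{principal} and to derive uniqueness from the strict dissipativity in $\mathbf{(H_{3,0})}$, which yields a synchronous coupling contraction in both the discrete and the continuous settings.

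For $\mathbf{(E_\gamma)}$, I would start by observing that any two Euler recursions $(\bar{Y}^1_{n\gamma})$ and $(\bar{Y}^2_{n\gamma})$ driven by the same increments $(\Delta_n)$ satisfy, by expanding the square and using $\mathbf{(H_{3,0})}$ together with the Lipschitz bound $|b(x)-b(y)|\le L|x-y|$,
$$|\bar{Y}^1_{(n+1)\gamma}-\bar{Y}^2_{(n+1)\gamma}|^2\le (1-2\alpha\gamma+L^2\gamma^2)\,|\bar{Y}^1_{n\gamma}-\bar{Y}^2_{n\gamma}|^2.$$
Choose $\gamma$ small enough (and $\le\gamma_0$) so that $\rho:=1-2\alpha\gamma+L^2\gamma^2\in(0,1)$. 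If $(Y_{n\gamma})_{n\ge 0}$ is any discretely stationary solution to $\mathbf{(E_\gamma)}$, Kolmogorov's theorem extends it together with a two-sided increment sequence $(\Delta_n)_{n\in\mathbb{Z}}$. Writing $T_N(\cdot,\Delta_{-N+1},\ldots,\Delta_0)$ for the $N$-step Euler map, the identity $Y_0=T_N(Y_{-N\gamma},\Delta_{-N+1},\ldots,\Delta_0)$ combined with the contraction gives
$$|Y_0-T_N(0,\Delta_{-N+1},\ldots,\Delta_0)|\le \rho^{N/2}\,|Y_{-N\gamma}|.$$
By stationarity $|Y_{-N\gamma}|\overset{\cal L}{=}|Y_0|$, so the right-hand side tends to $0$ in probability. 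Hence $Y_0$ is a.s. equal to a deterministic measurable functional of $(\Delta_k)_{k\le 0}$, and this functional depends only on $b$ and $\gamma$, not on the chosen stationary solution. The law of $Y_0$ is therefore uniquely determined, and the Euler recursion propagates this uniqueness to every finite-dimensional margin.

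For the continuous SDE the argument is parallel. Under $\mathbf{(H_{3,0})}$, two solutions $(X^x_t),(X^y_t)$ of \eqref{fractionalSDE} driven by the same continuous noise satisfy $\frac{d}{dt}|X^x_t-X^y_t|^2\le -2\alpha|X^x_t-X^y_t|^2$, hence $|X^x_t-X^y_t|\le e^{-\alpha t}|x-y|$. Given a stationary solution $(X_t)_{t\ge 0}$ of \eqref{fractionalSDE}, set $W_t:=X_t-X_0-\int_0^t b(X_s)\,ds$; then $W\overset{\cal L}{=}Z$, drives $X$, and by Kolmogorov one can extend $(X,W)$ to $t\in\ER$ preserving the stationarity of $X$ and the stationary-increments structure of $W$. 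Letting $\tilde{X}^{-T}_t$ denote the solution driven by $W$ started at $0$ at time $-T$, the contraction gives $|X_0-\tilde{X}_0^{-T}|\le e^{-\alpha T}|X_{-T}|$, and the right-hand side tends to $0$ in probability by stationarity of $X$. This exhibits $X_0$ as a deterministic measurable functional of $W$, the same functional for every stationary solution. Since $W\overset{\cal L}{=}Z$ in all cases, the law of $X_0$, and by the same reasoning applied at arbitrary times the joint law of $(X_{t_1},\ldots,X_{t_n})$, is uniquely determined.

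The main obstacle is conceptual rather than technical: two stationary solutions need not live on the same probability space and may have different joint dependence between $X_0$ and the driving noise, so one cannot simply couple them through a common copy of $Z$. The resolution in both settings is to identify $X_0$ as an explicit measurable functional of the past of the noise that is intrinsic to $b$ and $Z$; the dissipative contraction is precisely what makes this functional well-defined as an $N\to\infty$ (respectively $T\to\infty$) limit.
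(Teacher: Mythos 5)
Your argument is correct in substance, but it is not the route the paper takes. The paper (Lemma \ref{lemme4}(ii), to which Proposition \ref{uniq} appeals, existence coming from Theorem \ref{principal}) couples each stationary solution $Y_{\cdot,i}$ with the solution $X^x_{\cdot,i}$ started from a fixed deterministic point $x$ and driven by the \emph{same} noise $Z^i$ on the same probability space; weak uniqueness of point-started solutions (Lipschitz $b$) makes $X^x_{\cdot,1}\overset{\cal L}{=}X^x_{\cdot,2}$ the bridge between the two spaces, and then stationarity lets one slide the observation window to $+\infty$, where the Gronwall contraction $|Y_{t,i}-X^x_{t,i}|\le|Y_{0,i}-x|e^{-\alpha t}$ kills the difference; the discrete case is the analogous forward recursion estimate. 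You instead extend each stationary solution together with its noise to two-sided time (Kolmogorov) and run a pullback argument: the contraction shows $Y_0$ (resp. $X_0$) is the limit in probability of the deterministic maps $T_N(0,\Delta_{-N+1},\ldots,\Delta_0)$ (resp. of solutions started at $0$ at time $-T$), hence an a.s. measurable functional of the past noise increments, intrinsic to $b$, $\gamma$ and the law of $Z$. Both hinge on exactly the same dissipativity estimate, and both correctly dodge the key obstacle that the two stationary solutions live on different spaces with different joint laws of $(X_0,Z)$. What each buys: the paper's forward coupling is lighter, needing no two-sided extension and no discussion of limit functionals; yours is a touch heavier but yields a stronger structural conclusion, namely that the stationary state is a.s. a function of the past of its own noise, which resonates with Section \ref{sect-properties} of the paper. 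Two points you should make explicit to close the argument: (a) the limit functional is the \emph{same} for all stationary solutions because the approximants are fixed deterministic maps and being Cauchy in probability is a property of the common law of the two-sided noise (itself uniquely determined by stationarity of the increments of $Z$); (b) uniqueness of the law of $Y_0$ alone does not propagate to the margins, since $Y_0$ and the future increments are dependent in general (Proposition \ref{dependenceprop}); what propagates is the representation of the whole path as a fixed measurable functional of the two-sided noise, which is exactly what your construction provides.
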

\noindent The next corollary, whose proof is obvious is nevertheless useful.
\begin{Corollaire} 
Assume $\mathbf{(H_1)}$ and $\mathbf{(H_{3,0})}$. Denote by $\mu\in{\cal P}({\cal C}(\ER_+,\ER^d))$, the distribution of the unique stationary solution to \eqref{fractionalSDE0}. Then, 
  \begin{equation}
  d_{\mathbb{D}(\ER_+,\ER^d)}({\cal P}^{(\infty,\gamma)}(\omega,d\alpha),\mu)\xrn{\gamma\rightarrow0}0\quad a.s.  
  \end{equation}
  where $d_{\mathbb{D}(\ER_+,\ER^d)}$ denotes a distance on ${\cal P}(\mathbb{D}(\ER_+,\ER^d))$ (endowed with the weak topology), the set of probabilities on $\ER^d.$
 In particular,
  \begin{equation}
    \label{eq:conv}
 d_{\ER^d}({\cal P}^{(\infty,\gamma)}(\omega,d\alpha),\nu)\xrn{\gamma\rightarrow0}0\quad a.s.  
  \end{equation}
  where $\nu$ is the  unique  invariant distribution of~\eqref{fractionalSDE}  and, $d_{\ER^d}$ is a distance on ${\cal P}(\ER^d)$.
 \end{Corollaire}

 \noindent 
We will not study the rate of convergence relative to~\eqref{eq:conv} in this paper.\\
\begin{Remarque} We chose in this paper to work with the stepwise constant Euler scheme because this continuous-time scheme is in a sense the simplest to manage. The default is that the previous convergence result is stated for the Skorokhod topology. Replacing the stepwise constant Euler scheme by a continuous-time Euler scheme built by interpolations would lead to a convergence result for the topology of uniform convergence on compact sets.
\end{Remarque}

\noindent
Although  $Z$ is not supposed to have an explicit  integral representation with respect to  a Wiener process, and that   the setting of the Stochastic Dynamical 
System (in short SDS) of~\cite{hairer} seems  hard to use in our work,  let us start a brief comparison of our results 
with those of~\cite{hairer} if $(Z_t)_{t\ge0}$ is fractional Brownian motion.
First, our assumption  $\mathbf{(H_{3})}$ is a stability assumption a little weaker than  $\mathbf{(A_{1})}$ in~\cite{hairer}.
Likewise  $\mathbf{(H_{2})}$ $(i)$ and $b$ Lipschitz continuous are   similar to $\mathbf{(A_{2})}$ for $ N=1$ 
with Hairer's notation.  In~\cite{hairer} Stochastic Dynamical System (SDS Definition 2.7) and a Feller semigroup 
$ {\mathcal Q}_t $ ((2.4) in~\cite{hairer}) are defined on $  \ER^d \times { \cal C}(\ER_+,\ER^d).$ 
The first marginal of a stationary measure $\mu$ on $  \ER^d \times { \cal C}(\ER_+,\ER^d)$ defined in section 2.3 of~\cite{hairer} is what
we call an invariant measure in Definition~\ref{stat-sol-ini}. 
Moreover $ {\cal P}^{(n,\gamma)}$  for large $n$ and small $ \gamma$ are natural approximations of the stationary measures
of~\cite{hairer}.

\section{Tightness of $({\cal P}^{(n,\gamma)}(\omega,d\alpha))_{n\ge1}$}\label{tightness1}
The main result of this section is Proposition \ref{prop1} where we show the first part of Theorem \ref{principal}, $i.e.$ we obtain that  $({\cal P}^{(n,\gamma)}(\omega,d\alpha))_{n\ge1}$
is $a.s.$ tight for the Skorokhod topology on $\mathbb{D}(\ER_+,\ER^d)$ when $\gamma$ is sufficiently small. A fundamental step for this proposition is to obtain the  $a.s.$ tightness for the sequence of initial distributions $({\cal P}^{(n,\gamma)}_0(\omega,d\alpha))_{n\ge1}$. This property is established in the following lemma.
\begin{lemme}\label{lemme3} Assume $\mathbf{(H_1)}$ and $\mathbf{(H_2)}$. Then, there exists $\gamma_0>0$ such that for every $\gamma\le \gamma_0$,
\begin{equation}\label{eq20}
\sup_{n\ge1}\frac{1}{n}\sum_{k=1}^n|\bar{X}_{\gamma({k-1})}|^2<+\infty\quad a.s.
\end{equation}
\end{lemme}
\begin{proof}
We have :
\begin{align*}
|\bar{X}_{{(n+1)\gamma}}|^2&=|\bar{X}_{\Gn}|^2+2\gamma \psg\bar{X}_{\Gn}, b(\bar{X}_{\Gn})\psd+2\psg\bar{X}_{\Gn},\Delta_{n+1}\psd\\
&+\left(\gamma^2 |b(\bar{X}_{\Gn})|^2+2\gamma \psg b(\bar{X}_{\Gn}),\Delta_{n+1}\psd+|\Delta_{n+1}|^2\right).
\end{align*}
Let $\varepsilon>0$. Using assumption $\mathbf{(H_2)}(i)$ and the elementary inequality $|\psg u,v\psd|\le \frac{1}{2}(|\varepsilon u|^2+|v/\epsilon|^2)$ (for every $u,v\in\ER^d$), we have:
\begin{align*}
&|\psg\bar{X}_{\Gn}, \Delta_{n+1}\psd|\le\frac{1}{2}\left(\varepsilon|\bar{X}_{{n\gamma}}|^2+\frac{1}{\varepsilon}| \Delta_{n+1}|^2\right) \quad \textnormal{and}\\
&|\psg b(\bar{X}_{\Gn}),\Delta_{n+1}\psd|\le\frac{1}{2}\left(\varepsilon C(1+|\bar{X}_{{n\gamma}}|^2)+\frac{1}{\varepsilon}| \Delta_{n+1}|^2\right).
\end{align*}
It follows from assumption $\mathbf{(H_2)}(ii)$ that for every $\varepsilon>0$, 
\begin{equation*}
|\bar{X}_{{(n+1)\gamma}}|^2\le |\bar{X}_{\Gn}|^2+2\gamma(\beta-\alpha |\bar{X}_{\Gn}|^2)+p(\gamma,\varepsilon)(1+|\bar{X}_{{n\gamma}}|^2)+C(\varepsilon,\gamma)|\Delta_{n+1}|^2
\end{equation*}
where $C(\gamma,\varepsilon)$ is a positive constant depending on $\gamma$ and $\varepsilon$ and $p(\gamma,\varepsilon)\le C(\varepsilon+\gamma \varepsilon+\gamma^2)$. Then, set $\varepsilon=\gamma^2$ (for instance). For $\gamma$ sufficiently small,
$p(\gamma,\varepsilon)\le \alpha\gamma/2$. Hence, we obtain that there exist $\tilde{\beta}\in\ER$ and $\tilde{\alpha}>0$ such that
$\forall n\ge0$
 \begin{align}
|\bar{X}_{{(n+1)\gamma}}|^2&\le |\bar{X}_{\Gn}|^2+\gamma(\tilde{\beta}-\tilde{\alpha}|\bar{X}_{\Gn}|^2)+C|\Delta_{n+1}|^2\quad \nonumber\\
&\le (1-\gamma\tilde{\alpha})|\bar{X}_{\Gn}|^2+C(\gamma+|\Delta_{n+1}|^2).\label{ineq1}
\end{align}
Finally, by induction, one obtains for every $n\ge1$:
$$|\bar{X}_{\Gn}|^2\le (1-\gamma\tilde{\alpha})^n|x|^2+C\sum_{k=1}^{n}(1-\gamma\tilde{\alpha})^{n-k}(\gamma+|\Delta_{k}|^2).$$
Hence, in order to prove \eqref{eq20}, it is enough to show that for $\gamma$ sufficiently small,
\begin{equation}\label{eq21}
\sup_{n\ge1}\frac{1}{n}\sum_{k=1}^{n-1}\sum_{l=1}^{k}(1-\tilde{\alpha}\gamma)^{k-l}|\Delta_l|^2<+\infty\quad a.s.
\end{equation}
But  
$$\sum_{k=1}^n\sum_{l=1}^{k}(1-\tilde{\alpha}\gamma)^{k-l}|\Delta_l|^2=\sum_{k=1}^n|\Delta_k|^2\sum_{u=0}^{n-k}(1-\tilde{\alpha}\gamma)^u\le C\sum_{k=1}^n|\Delta_k|^2\le C\sum_{i=1}^{d}\sum_{k=1}^n(\Delta_k^i)^2,$$
and it follows that it is in fact enough to show that 
\begin{equation}\label{eq22}
\sup_{n\ge1}\frac{1}{n}\sum_{k=1}^n(\Delta_k^i)^2<+\infty\quad a.s. \quad\forall i\in\{1,\ldots,{d}\}.
\end{equation}
\noindent Now,  by \eqref{ergodicite-increments},
$$\frac{1}{n}\sum_{k=1}^n(\Delta_k^i)^2\xrn{n\nrn}\ES[(\Delta_1^i)^2],$$
and \eqref{eq22} is satisfied. This completes the proof.
\end{proof}
\begin{prop}\label{prop1} Assume assumption $\mathbf{(H_1)}$ and $\mathbf{(H_2)}$. Then, there exists $\gamma_0>0$ such that for every $\gamma\le \gamma_0$, $({\cal P}^{(n,\gamma)}(\omega,d\alpha))_{n\ge1}$ is $a.s.$ tight on $\mathbb{D}(\ER_+,\ER^d)$.
\end{prop}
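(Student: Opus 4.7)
The plan is to reduce tightness on $\mathbb{D}(\ER_+,\ER^d)$ to tightness on $\mathbb{D}([0,T],\ER^d)$ for each fixed $T>0$, and then to exploit the stepwise constant structure of the scheme. By the classical compactness criterion on $\mathbb{D}([0,T],\ER^d)$ (see \cite{billingsley}), I would need to verify two ingredients under the empirical measures: a uniform sup-norm bound on paths, and a vanishing Skorokhod modulus $w'_\delta$ as $\delta\to 0$.

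The modulus part is essentially for free, which is the key simplification here: every sample path $\bar{X}^{(\gamma(k-1))}$ is piecewise constant on the grid $\gamma\mathbb{N}$, so partitioning $[0,T]$ at the midpoints $(j+\tfrac12)\gamma$ produces subintervals of length $\gamma$ on each of which the trajectory is constant. Thus $w'_\delta(\bar{X}^{(\gamma(k-1))},T)=0$ for every $k,\omega$ as soon as $\delta<\gamma/2$, and the modulus condition is automatically satisfied by every atom of $({\cal P}^{(n,\gamma)}(\omega,\cdot))_{n\ge 1}$.

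For the sup-norm, I would apply Markov's inequality to obtain, for every $K>0$,
\begin{equation*}
{\cal P}^{(n,\gamma)}\bigl(\omega,\{\alpha:\sup_{t\in[0,T]}|\alpha(t)|>K\}\bigr)\le\frac{1}{K^2}\cdot\frac{1}{n}\sum_{k=1}^n\sup_{t\in[0,T]}|\bar{X}^{(\gamma(k-1))}_t|^2,
\end{equation*}
and then control the Ces\`aro-type quantity on the right by reindexing. With $N=\lfloor T/\gamma\rfloor$, the inner supremum equals $\max_{0\le j\le N}|\bar{X}_{\gamma(k+j-1)}|^2$, so a straightforward double-sum rearrangement bounds the full average by $(N+1)n^{-1}\sum_{m=0}^{n+N-1}|\bar{X}_{\gamma m}|^2$, which is $a.s.$ bounded in $n$ thanks to Lemma~\ref{lemme3}. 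Letting $K\to\infty$ then yields the uniform small-tail bound needed for tightness on $\mathbb{D}([0,T],\ER^d)$, and hence on $\mathbb{D}(\ER_+,\ER^d)$.

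The argument is essentially mechanical once Lemma~\ref{lemme3} is in hand; the only substantive observation is that the stepwise constant structure kills the Skorokhod oscillation at scales below $\gamma/2$, so the tightness question reduces to a pure sup-norm question. Had a genuinely continuous-time interpolation of the Euler scheme been used instead, one would have to supplement the argument with an oscillation estimate of the paths leveraging the H\"older regularity of $Z$ implied by $\mathbf{(H_1)}$, which is precisely the trade-off mentioned in the remark following Theorem~\ref{principal}.
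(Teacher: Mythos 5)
Your proof is correct and follows essentially the same route as the paper: Billingsley's two-condition criterion for each horizon $T$, with the Skorokhod modulus killed by the stepwise constant structure of the scheme and the sup-norm part reduced to the Ces\`aro moment bound of Lemma~\ref{lemme3}. Two remarks. First, your justification of the modulus step contains a slip: the path $\bar{X}^{(\gamma(k-1))}$ is \emph{not} constant on the midpoint intervals $[(j-\tfrac12)\gamma,(j+\tfrac12)\gamma)$, since it jumps exactly at $j\gamma$, which lies in the interior of such an interval. The correct (and the paper's implicit) choice is to partition at the grid points $j\gamma$ themselves, on each of which the path is constant on $[j\gamma,(j+1)\gamma)$, giving $w'_T(\bar{X}^{(\gamma(k-1))},\delta)=0$ for every $\delta<\gamma$ (up to the usual harmless adjustment of the last interval near $T$); your conclusion is unaffected, only the stated partition must be changed. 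Second, in the sup-norm step you bound the window maximum $\max_{0\le j\le[T/\gamma]}|\bar{X}_{\gamma(k-1+j)}|^2$ by the window sum, paying a factor $[T/\gamma]+1$, so that only the conclusion of Lemma~\ref{lemme3} is needed; the paper instead re-uses the contraction inequality \eqref{ineq1} to dominate the shifted window supremum by the value at the left endpoint plus $C\bigl(1+\sum_l|\Delta_l|^2\bigr)$ over the window, and then invokes the ergodic bound \eqref{eq22} for the increments. Your cruder bound is perfectly adequate here since $\gamma$ is fixed at this stage, and it is slightly more economical in that it avoids a second use of the recursion and of the ergodic theorem; otherwise the two arguments coincide.
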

\begin{proof}

We have to prove the two following points (see $e.g.$ \cite{billingsley}, Theorem 15.2):
\begin{itemize}
\item{1.} $\forall T>0$, $(\mu_T^{(n)}(\omega,dy))$ defined by
$$\mu_T^{(n)}(\omega,dy)=\sum_{k=1}^n\delta_{\{\sup_{t\in[0,T]}|\bar{X}^{(k-1)}_t|\}}(dy),$$
is an $a.s.$ tight sequence.
\item{2.} For every $\eta>0$, 
$$\limsup_{\delta\rightarrow0}\limsup_{n\rightarrow+\infty}\frac{1}{n}\sum_{k=1}^n\delta_{\{\omega'_T(\bar{X}^{(k-1)},\delta)\ge \eta\}}=0 \quad a.s.$$
with
$$w'_T(x,\delta)=\underset{\{t_i\}}{\inf}\{\max_{i\le r}\sup_{s,t\in[t_i,t_{i+1})}|x_t-x_s|\}$$
where  the infimum extends over finite sets $\{t_i\}$ satisfying:
$$0=t_0<t_1<\ldots<t_r=T\quad \textnormal{and}\quad\inf_{i\le r}(t_i-t_{i-1})\ge\delta.$$
\end{itemize}
In fact, since the process has only jumps at times $n\gamma$ with $n\in\EN$, $\omega'_T(\bar{X}^{(k)},\delta)=0$ when $\delta<\gamma$. It follows that the second point is obvious. Then, let us prove the first point.
By induction, one gets from \eqref{ineq1} that, for every $k\ge n$,
$$|\bar{X}_{k\gamma}|^2\le |\bar{X}_{n\gamma}|^2 (1-\gamma\tilde{\alpha})^{k-n}+C\sum_{l=n+1}^{k}(1-\gamma\tilde{\alpha})^{k-l}(\gamma+|\Delta_{l}|^2).$$
This implies that 
$$\sup_{t\in[0,T]}|\bar{X}^{(k-1)}_t|^2=\sup_{k\in\{n,\ldots,n+[T/\gamma]\}}|\bar{X}_{k\gamma}|^2\le|\bar{X}_{n\gamma}|^2+C(1+\sum_{l=n+1}^{n+[T/\gamma]}|\Delta_l|^2).$$
Thus, if $V(x)=|x|^2,$ one can deduce:
\begin{align*}
\mu^{(n)}(\omega,V)&\le \frac{1}{n}\sum_{k=1}^{n} V(\bar{X}_{(k-1)\gamma})+C\Big(1+\frac{1}{n}\sum_{k=1}^{n}\sum_{l=k+1}^{k+[T/\gamma]}|\Delta_l|^2\Big)\\ 
&\le\sup_{n\ge1}{\cal P}_0^{(n,\gamma)}(\omega,V)+C\Big(1+\frac{1}{n}\left[\frac{T}{\gamma}\right]\sup_{n\ge1}\frac{1}{n}\sum_{k=1}^{n+[T/\gamma]}|\Delta_k|^2\Big)<+\infty\quad a.s.
\end{align*}
thanks to Lemma \ref{lemme3} and \eqref{eq22}. Therefore, $\sup_{n\ge1}\mu^{(n)}_T(\omega,V)<+\infty$ $a.s$ which implies that 
$(\mu^{(n)}_T(\omega,dy))$ is $a.s.$ tight on $\ER^d$ (see $e.g.$. \cite{duflo}, Proposition 2.1.6).
\end{proof}
\section{Identification of the weak limits of $({\cal P}^{(n,\gamma)}(\omega,d\alpha))_{n\ge1}$} \label{identification1}
In the following proposition, we show that every weak limit of $({\cal P}^{(n,\gamma)}(\omega,d\alpha))_{n\ge1}$ is $a.s$
a stationary Euler scheme with step $\gamma$ of SDE \eqref{fractionalSDE}.
\begin{prop}\label{prop2} Assume $\mathbf{(H_1)}$ and let ${\cal P}^{(\infty,\gamma)}(\omega,d\alpha)$ denote a weak limit of $({\cal P}^{(n,\gamma)}(\omega,d\alpha))_{n\ge1}$. Then, a.s., 
${\cal P}^{(\infty,\gamma)}(\omega,d\alpha)$ is the distribution of a càdlàg process denoted by $Y^{(\infty,\gamma)}$ such that, $a.s.$ in $\omega$,
\begin{itemize}
\item[(a)] $(Y^{(\infty,\gamma)}_{l\gamma+t})_{t\ge0}\overset{\mathbb{D}(\ER_+,\ER^d)}{=}(Y^{(\infty,\gamma)}_t)_{t\ge0}$ for every $l\in\EN$ where $\overset{\mathbb{D}(\ER_+,\ER^d)}{=}$ denotes the equality in distribution on $\mathbb{D}(\ER_+,\ER^d)$. 
\item[(b)] $N^{(\infty,\gamma)}$
defined by
$$N^{(\infty,\gamma)}_t=Y^{(\infty,\gamma)}_t-Y^{(\infty,\gamma)}_0-\int_0^{\underline{t}_\gamma}b(Y^{(\infty,\gamma)}_s)ds$$
is equal in law to $ \bar{Z}^{\gamma}$ with $\underline{t}_\gamma=\gamma[t/\gamma].$
\end{itemize}
\end{prop}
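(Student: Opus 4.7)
The tightness established in Proposition~\ref{prop1} ensures that along some subsequence an $a.s.$ weak limit ${\cal P}^{(\infty,\gamma)}(\omega,\cdot)$ exists on $\mathbb{D}(\ER_+,\ER^d)$. Since each atom $\bx^{(\gamma(k-1))}$ of each ${\cal P}^{(n,\gamma)}(\omega,\cdot)$ is a step function with jumps only at multiples of $\gamma$, the limit is supported on càdlàg paths with the same property; I denote its canonical process by $Y^{(\infty,\gamma)}$ and prove (a) and (b) separately.

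For (a), the plan is to compare ${\cal P}^{(n,\gamma)}(\omega,\cdot)$ with its pushforward under the time-shift $\Theta_{l\gamma}\alpha=\alpha(\cdot+l\gamma)$, which is continuous on $\mathbb{D}(\ER_+,\ER^d)$. For any bounded continuous functional $F$, only the first $l$ and last $l$ atoms contribute to the difference of empirical averages, so
$$\left|\int F\,d{\cal P}^{(n,\gamma)}(\omega,\cdot) - \int F\circ\Theta_{l\gamma}\,d{\cal P}^{(n,\gamma)}(\omega,\cdot)\right| \le \frac{2l\|F\|_\infty}{n}\xrn{n\nrn}0,$$
and passing to the weak limit yields $\int F\,d{\cal P}^{(\infty,\gamma)}=\int F\circ\Theta_{l\gamma}\,d{\cal P}^{(\infty,\gamma)}$, which is exactly the shift invariance claimed.

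For (b), the starting point is the telescoped Euler identity, obtained by iterating~\eqref{eq:fractionalSDE-disc} over $[\gamma(k-1),\gamma(k-1)+\underline{t}_\gamma]$:
$$\bx^{(\gamma(k-1))}_t - \bx^{(\gamma(k-1))}_0 - \int_0^{\underline{t}_\gamma}b(\bx^{(\gamma(k-1))}_s)\,ds = Z_{\gamma(k-1)+\underline{t}_\gamma}(\omega)-Z_{\gamma(k-1)}(\omega).$$
Thus $N$ applied to the $k$-th atom of ${\cal P}^{(n,\gamma)}(\omega,\cdot)$ outputs exactly the $\gamma(k-1)$-shifted discretized increment of $Z$. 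To identify the law of $N$ under the limit I would test against finite-dimensional functionals $g(N_{t_1},\ldots,N_{t_m})$ with $g$ bounded continuous and $0<t_1<\cdots<t_m$ outside $\gamma\EN$. The empirical average
$$\frac{1}{n}\sum_{k=1}^n g\bigl(Z_{\gamma(k-1)+\underline{t_1}_\gamma}(\omega)-Z_{\gamma(k-1)}(\omega),\ldots,Z_{\gamma(k-1)+\underline{t_m}_\gamma}(\omega)-Z_{\gamma(k-1)}(\omega)\bigr)$$
is a Birkhoff-type sum of a bounded measurable function of the shifted increment sequence $(\Delta_n)_{n\ge 1}$, so ergodicity~\eqref{ergodicite-increments} forces its $a.s.$ limit to be $\ES[g(\bar{Z}^\gamma_{t_1},\ldots,\bar{Z}^\gamma_{t_m})]$.

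The only technical obstacle is that $\alpha\mapsto N(\alpha)_t$ is not Skorokhod-continuous at paths with a jump at $t$. This is circumvented by restricting the testing times to $t_i\notin\gamma\EN$: there $\alpha\mapsto N(\alpha)_{t_i}$ is continuous on the full-measure support of ${\cal P}^{(\infty,\gamma)}$, and the Portmanteau theorem transfers the $a.s.$ ergodic limit to ${\cal P}^{(\infty,\gamma)}$, identifying every finite-dimensional marginal of $N^{(\infty,\gamma)}$ at such times with that of $\bar{Z}^\gamma$. Right-continuity of the càdlàg paths then extends the identification of the law to every finite set of times, giving (b).
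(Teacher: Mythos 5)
Your overall strategy coincides with the paper's: stationarity is obtained from the telescoping of the empirical sums under the $l\gamma$-shift, and the noise is identified by telescoping the Euler recursion so that the drift-removed functional applied to the $k$-th atom is exactly the shifted, discretized increment path of $Z$, after which the ergodicity \eqref{ergodicite-increments} gives the a.s.\ limit $\ES[F(\bar Z^\gamma)]$. For part (b) your passage to the limit is in fact handled more explicitly than in the paper: the paper tests with bounded continuous functionals of the whole path and exchanges the weak limit with $F\circ\Phi_\gamma$ without comment, whereas you test finite-dimensional marginals at times $t_i\notin\gamma\EN$, where $\alpha\mapsto\alpha_{t_i}$ (and hence $\alpha\mapsto N(\alpha)_{t_i}$, the integral term being continuous for a fixed upper limit $\underline{t_i}_\gamma$) is continuous at every path without a jump at $t_i$, i.e.\ ${\cal P}^{(\infty,\gamma)}(\omega,\cdot)$-a.s.; the upgrade to arbitrary times by right-continuity is correct.

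The one step whose justification fails as written is in (a): the shift $\Theta_{l\gamma}$ is \emph{not} continuous on $\mathbb{D}(\ER_+,\ER^d)$. It is discontinuous exactly at paths having a jump at $l\gamma$ (take $\alpha=\mathbf{1}_{[l\gamma,\infty)}$ and $\alpha_n=\mathbf{1}_{[l\gamma+1/n,\infty)}$: then $\alpha_n\to\alpha$ for the Skorokhod topology but $\Theta_{l\gamma}\alpha_n\not\to\Theta_{l\gamma}\alpha$), and these are precisely the paths charged by ${\cal P}^{(\infty,\gamma)}(\omega,\cdot)$, so neither the continuous-mapping theorem nor its a.e.-continuity version applies to the term $\int F\circ\Theta_{l\gamma}\,d{\cal P}^{(n_k,\gamma)}$ directly. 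The gap is repairable with an observation you already state: the set $S_\gamma$ of c\`adl\`ag paths constant on each $[m\gamma,(m+1)\gamma)$ is closed in $\mathbb{D}(\ER_+,\ER^d)$ (jump times of a Skorokhod limit are limits of jump times, and $\gamma\EN$ is closed), so Portmanteau gives ${\cal P}^{(\infty,\gamma)}(\omega,S_\gamma)=1$; on $S_\gamma$ the Skorokhod and locally uniform topologies coincide and $\Theta_{l\gamma}:S_\gamma\to S_\gamma$ is continuous, and since all the measures involved are supported by the closed set $S_\gamma$, the mapping argument can be run there. Alternatively, repeat in (a) the device you use in (b), which is essentially what the paper does: test with finite-dimensional functions at times $t_1,\ldots,t_r$ avoiding $\gamma\EN$ (the shift by $l\gamma$ keeps such times off the grid, so all the evaluations involved are a.s.\ continuity points of the limit), telescope the empirical sums, and then recover shift-invariance of the law on $\mathbb{D}(\ER_+,\ER^d)$ from the finite-dimensional marginals by right-continuity.
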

\begin{Remarque}\label{DSSF}  It follows from the previous proposition that $(Y^{(\infty,\gamma)}_{t})_{t\ge0}$ is a discretely stationary solution to ~$\mathbf{(E_\gamma)}$.
\end{Remarque} 
\begin{proof}
\textit{(a)} Let ${\cal T}$ denote a countable dense subset of $\ER_+$ and ${\cal S}_r^K$, a countable dense subset of the space of continuous functions $f:\ER^r\rightarrow\ER$ with compact support.
It suffices to prove that $a.s.$, $\forall r\ge0$, for every $f\in{\cal S}_r^K$, for every 
$t_1,\ldots,t_r\in {\cal T}$,$\forall l\in\EN$, 
$$\int f(\alpha_{t_1},\ldots,\alpha_{t_r}){\cal P}^{(\infty,\gamma)}(\omega,d\alpha)=\int f(\alpha_{t_1+l\gamma},\ldots,\alpha_{t_r+l\gamma}){\cal P}^{(\infty,\gamma)}(\omega,d\alpha).$$
Since ${\cal T}$ and ${\cal S}_r^K$ are countable, we  only have to prove that  
 $\forall r\ge0$, for every $f\in{\cal S}_r^K$, for every 
$t_1,\ldots,t_r\in {\cal T}$, $\forall l\in\EN$,
\begin{equation}\label{statfinite}
\int f(\alpha_{t_1},\ldots,\alpha_{t_r}){\cal P}^{(\infty,\gamma)}(\omega,d\alpha)=\int f(\alpha_{t_1+l\gamma},\ldots,\alpha_{t_r+l\gamma}){\cal P}^{(\infty,\gamma)}(\omega,d\alpha)\quad a.s.
\end{equation}
Let now $f\in{\cal S}_r^K$, $l\in\EN$ and $t_1,\ldots,t_r\in{\cal T}$. On the one hand,
\begin{align*}
&\frac{1}{n}\sum_{k=1}^{n}\left(f(\bar{X}_{t_1}^{(k-1)},\ldots,\bar{X}_{t_r}^{(k-1)})-f(\bar{X}_{t_1+l\gamma}^{(k-1)},\ldots,\bar{X}_{t_r+l\gamma}^{(k-1)})\right)\\
&=\frac{1}{n}\sum_{k=1}^{n}f(\bar{X}_{(k-1)\gamma+t_1},\ldots,\bar{X}_{(k-1)\gamma+t_r})-
\frac{1}{n}\sum_{k=1}^{n}f(\bar{X}_{(k-1+l)\gamma+t_1},\ldots,\bar{X}_{(k-1+l)\gamma+t_r})\\
&=\frac{1}{n}\left(\sum_{k=1}^{l-1}f(\bar{X}_{(k-1)\gamma+t_1},\ldots,\bar{X}_{(k-1)\gamma+t_r})-\sum_{k=n+1}^{n+l}f(\bar{X}_{(k-1)\gamma+t_1},\ldots,\bar{X}_{(k-1)\gamma+t_r})\right),
\end{align*}
and this last term converges to 0 when $n\rightarrow+\infty$ $a.s.$ since $f$ is bounded. 
On the other hand, since ${\cal P}^{(\infty,\gamma)}(\omega,d\alpha)$ denotes a weak limit of $({\cal P}^{(n,\gamma)}(\omega,d\alpha))_{n\ge1}$, there exists  a subsequence
$(n_k(\omega))_{k\ge1}$ such that $({\cal P}^{(n_k(\omega),\gamma)}(\omega,d\alpha))_{k\ge1}$ converges weakly to ${\cal P}^{(\infty,\gamma)}(\omega,d\alpha)$ (for the Skorokhod topology). This convergence implies in particular the finite-dimensional convergence. Therefore,
\begin{align*}
&\frac{1}{n}\sum_{k=1}^{n}f(\bar{X}_{t_1}^{(k-1)},\ldots,\bar{X}_{t_r}^{(k-1)})\xrn{n\nrn}\int f(\alpha_{t_1},\ldots,\alpha_{t_r}){\cal P}^{(\infty,\gamma)}(\omega,d\alpha)\quad a.s.\; \\
&and,\quad
\frac{1}{n}\sum_{k=1}^{n}f(\bar{X}_{t_1+l\gamma}^{(k-1)},\ldots,\bar{X}_{t_r+l\gamma}^{(k-1)})\xrn{n\nrn}\int  f(\alpha_{t_1+l\gamma},\ldots,\alpha_{t_r+l\gamma}){\cal P}^{(\infty,\gamma)}(\omega,d\alpha)\quad a.s.
\end{align*}
Therefore, \eqref{statfinite} follows.\\

\noindent \textit{(b)} Let $\Phi_\gamma:\mathbb{D}(\ER_+,\ER^d)\rightarrow\mathbb{D}(\ER_+,\ER^d)$ be defined by:
\begin{equation}\label{phigamma}
(\Phi_\gamma(\alpha))_t=\alpha_t-\alpha_0-\int_0^{\underline{t}_\gamma}b(\alpha_s)ds.
\end{equation}
Then, $N^{(\infty,\gamma)}=\Phi_\gamma(Y^{(\infty,\gamma)})$. Let   $F:\mathbb{D}(\ER_+,\ER^d)\rightarrow\ER$ be a bounded continuous functional:
\begin{equation}\label{funcfrac}
\ES[F(N^{(\infty,\gamma)})]=\int F(\Phi_\gamma(\alpha)){\cal P}^{(\infty,\gamma)}(\omega,d\alpha)=\lim_{k\rightarrow+\infty}\int F(\Phi_\gamma(\alpha)){\cal P}^{(n_k(\omega),\gamma)}(\omega,d\alpha).
\end{equation}
For every $t\ge0$,
$$ \Phi_\gamma(\bar{X}^{(k)})_t=(\bar{Z}^\gamma_{{\gamma_k+t}}-\bar{Z}^\gamma_{k\gamma})=\sum_{l=k+1}^{k+[t/\gamma]}\Delta_l,$$
with the convention $\displaystyle{\underset{\emptyset}{\sum}=0}$. Thus, we derive from \eqref{funcfrac} that
$$\ES[F(N^{(\infty,\gamma)})]=\lim_{k\rightarrow+\infty}\frac{1}{n_k}\sum_{m=1}^{n_k} F\circ G((\Delta_l)_{l\ge m})$$
where $G:(\ER^d)^\EN\rightarrow\mathbb{D}(\ER_+,\ER^d)$ is defined by
$$G((u_n)_{n\ge1})_t=\sum_{l=1}^{[\frac{t}{\gamma}]} u_l\quad\forall t\ge0.$$
Now, $(\Delta_n)_{n\ge1}$ is an ergodic sequence (see \eqref{ergodicite-increments}). 
As a consequence, $a.s.$,
$$\frac{1}{n_k}\sum_{i=1}^{n_k} F\circ G((\Delta_l)_{l\ge i})\xrightarrow{k\rightarrow+\infty}\ES[F\circ G((\Delta_l)_{l\ge1})]=
\ES[F( \bar{Z}^{\gamma})].$$
The result follows.
\end{proof}
\section{Convergence of $({\cal P}^{(\infty,\gamma)}(\omega,d\alpha))$ when $\gamma\rightarrow0$}\label{identification2}
The aim of this section is to show that, $a.s.$, $({\cal P}^{(\infty,\gamma)}(\omega,d\alpha))_\gamma$ is $a.s.$ tight for the weak topology induced by the topology of uniform convergence on $\mathbb{D}(\ER_+,\ER^d)$ and that its weak limits when $\gamma\rightarrow0$ are stationary solutions to \eqref{fractionalSDE}.  The main difficulty for this second part of the proof of Theorem \ref{mainresult} is to show that  $({\cal P}^{(\infty,\gamma)}(\omega,d\alpha))_\gamma$ is $a.s.$ tight on $\ER^d$. For this step, we focus in Lemma \ref{lemme2} on the particular case $b(x)=-x$ (when $(X_t)_{t\ge0}$ is an Ornstein-Uhlenbeck process) where
some explicit computations lead to a control of $({\cal P}^{(\infty,\gamma)}(\omega,d\alpha))_\gamma$. Then, in Lemma \ref{lemme4}, we show that this control can be extended to SDE's whose drift term satisfies  $\mathbf{(H_3)}$.
Finally, we establish the main result of this section in Proposition \ref{propid2}.\\

\noindent Let $\gamma>0$. We denote by $(\Sigma_{n\gamma})$ the Euler scheme in the particular case $b(x)=-x$. We have $\Sigma_0=x$ and:
$$\Sigma_{(n+1)\gamma}=(1-\gamma)\Sigma_{n\gamma}+\Delta_{n+1}\quad\forall n\ge0.$$
\begin{lemme}\label{lemme2}
Assume $\mathbf{(H_1)}$ and let $\gamma\in(0,1)$. Then, $(\ES[|\Sigma_{{n\gamma}}|^2])_{n\ge0}$ is a convergent sequence. Denote by $v(\gamma)$ its limit. For every $\gamma_0\in(0,1)$, 
$$\sup_{\gamma\in(0,\gamma_0]} v(\gamma)<+\infty.$$
\end{lemme}
\begin{proof}  
First, by induction,
$$\Sigma_{{n\gamma}}=(1-\gamma)^n x+\sum_{k=0}^{n-1} (1-\gamma)^{k}\Delta_{n-k}.$$

It follows that
$$\ES[|\Sigma_{{n\gamma}}|^2]=(1-\gamma)^{2n}|x|^2+\sum_{i=1}^{d} \ES\Big[\Big(\sum_{k=0}^{n-1} (1-\gamma)^{k}\Delta_{n-k}^i\Big)^2\Big].$$
For every $i\in\{1,\ldots,{d}\}$,
$$\ES\Big[\Big(\sum_{k=0}^{n-1} (1-\gamma)^{k}\Delta_{n-k}^i\Big)^2\Big]=\sum_{k=0}^{n-1}\sum_{l=0}^{n-1}(1-\gamma)^{k+l}\phi_\gamma^i(l-k),$$
where $\phi_\gamma^i$ is defined by \eqref{covariance}. Setting $u=k+l$ and $v=l-k$, we deduce that
\begin{equation}\label{587}
\ES\Big[\Big(\sum_{k=0}^{n-1} (1-\gamma)^{k}\Delta_{n-k}^i\Big)^2\Big]=\sum_{u=0}^{2n-2} (1-\gamma)^u\sum_{v=(u-(2n-2))\vee (-u)}^{(2n-2-u)\wedge u}\phi_\gamma^i(v),
\end{equation}
with $x\wedge y=\min(x,y)$ and $x\vee y=\max(x,y)$. Then, with the definition of $\phi$, one can check that
\begin{equation*}
\sum_{v=(u-(2n-2))\vee (-u)}^{(2n-2-u)\wedge u}\phi_\gamma^i(v)=\begin{cases}{c}_i(\gamma)&\textnormal{if $u=0$ or $u=2n-2$,}\\
f_i^\gamma((2n-2-u)\wedge u)&\textnormal{otherwise,}
\end{cases}
\end{equation*}
with $f_i^\gamma(x)={c}_i(\gamma(x+1))-{c}_i(\gamma x).$ It follows from \eqref{587} that,
$$\ES\Big[\Big(\sum_{k=0}^{n-1} (1-\gamma)^{k}\Delta_{n-k}^i\Big)^2\Big]={c}_i(\gamma)+\sum_{u=1}^{n-1}(1-\gamma)^{u} f^\gamma_i(u)+R_n(\gamma),$$
with
\begin{align*}
R_n(\gamma)&=\sum_{u=n}^{2n-1}(1-\gamma)^{u}f_i^\gamma(2n-2-u)+(1-\gamma)^{2n-2}{c}_i(\gamma),\\
&=\sum_{u=-1}^{n-2}(1-\gamma)^{2n-2-u}f_i^\gamma(u)+(1-\gamma)^{2n-2}{c}_i(\gamma).
\end{align*}
Since ${c}_i$ is locally bounded and ${c}_i''$ is bounded on $[1,+\infty[$, ${c}_i$ is a subquadratic function, $i.e.$
$$|{c}_i(u)|\le C(1+|u|^2)\quad\forall u\ge0.$$
It follows that $f_i^\gamma$ is also a subquadratic function. Then, using that for every $u\in\{-1,\ldots,n-2\}$, $(1-\gamma)^{2n-2-u}\le(1-\gamma)^n$, we obtain that for every $\gamma\in(0,1)$,
$R_n(\gamma)\longrightarrow0$ as $n\rightarrow+\infty$.\\
Using again that $f_i^\gamma$ is a subquadratic function, we deduce that for every $\gamma\in(0,1)$, for every $i\in\{1,\ldots,d\}$,
$$\ES\Big[\Big(\sum_{k=0}^{n-1} (1-\gamma)^{k}\Delta_{n-k}^i\Big)^2\Big]\xrn{n\nrn}w_i(\gamma):={c}_i(\gamma)+\sum_{u=1}^{+\infty}(1-\gamma)^{u} f_\gamma^i(u)$$
and that $w_i(\gamma)$ is finite.
By a second order Taylor development, we have for every $u\ge1$: 
\begin{equation*}
f_i^\gamma(u)=\gamma {c}_i'(\gamma u)+\gamma^2 r(\gamma,u)\quad
\textnormal{with}\quad r(\gamma,u)=c''_i(\gamma(u+\theta_u)),\quad\theta_u\in[0,1].
\end{equation*}
Hence, using assumption $\mathbf{(H_1)}$, it follows that
\begin{align*}
&w_i(\gamma)={c}_i(\gamma)+\sum_{u=1}^{+\infty} \gamma(1-\gamma)^u\left[{c}'_i(\gamma u)+\gamma r(\gamma,u)\right]\quad\textnormal{with} \quad |r(\gamma,u)|\le C g_{i,1}(\gamma u),\\
&\textnormal{and,}\quad g_{i,1}(t)= t^{-a_i}1_{\{t\in(0,1)\}}+t^{-b_i}1_{\{t\ge1\}}.
\end{align*}
Let us now control the behavior of $w_i(\gamma)$ when $\gamma\rightarrow0$. First, for every $\gamma\in(0,1)$, for every $u\ge1$,
$(1-\gamma)^u\le \exp(-\gamma u)$. Then, since $t\mapsto \exp(-t)$, $t\mapsto g_{i,1}(t)$ are non-increasing on $\ER_+^*$, one deduces that for every $u\ge2$,
$$ \gamma(1-\gamma)^u g_{i,1}(\gamma u)\le
\int_{\gamma(u-1)}^{\gamma u} \exp(-t) g_{i,1}(t)dt.$$
Then, 
$$|\sum_{u=1}^{+\infty}\gamma^2(1-\gamma)^u r(\gamma,u)|\le {c}_i(\gamma)(1-\gamma)+C\gamma\int_\gamma^{+\infty}\exp(-t) g_{i,1}(t)dt.$$
Using that $a_i<2$, we easily check that the right-hand side is bounded and  tends to 0 when $\gamma\rightarrow0$. 
We now focus on the first term of $w_i(\gamma)$. First, by assumption $\mathbf{(H_1)}$, for every $t>0$,
\begin{equation}\label{equ'}
|{c}'_i(t)|\le C(1+g_{i,2}(t))\quad\textnormal{where}\quad g_{i,2}(t)=t^{1-a_i-\delta_1}1_{\{t\in(0,1)\}}+t^{1-b_i+\delta_2}1_{\{t\ge1\}},
\end{equation}
with $\delta_1\in(0,1)$ (resp. $\delta_2\in(0,1)$)  if $a_i= 1$ (resp. $b_i=1$) and $\delta_1=0$ (resp. $\delta_2=0$) otherwise. 
Second, using that $(1-\gamma)^u\le C (1-\gamma)^{-1}\exp(-t)$ for every $t\in[\gamma u,\gamma (u+1)]$, one deduces that
\begin{equation}\label{eq''}
\gamma(1-\gamma)^u (\gamma u)^\rho\le C\begin{cases}\int_{\gamma(u-1)}^{\gamma u} \exp(-t) t^{\rho}dt&\textnormal{if $\rho<0$}\\
\frac{1}{1-\gamma}\int_{\gamma u}^{\gamma (u+1)} \exp(-t) t^{\rho}dt&\textnormal{if $\rho\ge 0$}
\end{cases}
\end{equation}
It follows from  \eqref{equ'} and \eqref{eq''} that 
\begin{equation*}
\limsup_{\gamma\rightarrow0}\left|\sum_{u=1}^{+\infty} \gamma(1-\gamma)^u{c}'_i(\gamma u)\right|\le C\int_0^{+\infty}\exp(-t) \left(1+g_{i,2}(t)\right)dt.
\end{equation*}
The right-hand member is finite. This completes the proof.
\end{proof}
\begin{lemme} \label{lemme4} Assume $\mathbf{(H_1)}$ and $\mathbf{(H_3)}$ and denote by ${\cal P}^{(\infty,\gamma)}(\omega,d\alpha)$ a weak limit of $({\cal P}^{(n,\gamma)}(\omega,d\alpha))$. Then:\\ 
\noindent (i) With the notations of Proposition \ref{prop2}, there exists $\gamma_0>0$ such that, 
\begin{equation}\label{contloiinitiale}
\sup_{0<\gamma\le\gamma_0}\ES_\omega[|Y^{(\infty,\gamma)}_0|^2]<+\infty\quad a.s.
\end{equation} 
\noindent (ii) Assume $\mathbf{(H_1)}$ and $\mathbf{(H_{3,0})}$. Then, uniqueness holds for the distribution of stationary solutions to \eqref{eq-ids}. Similarly, there exists $\gamma_0>0$ such that for every $\gamma\le \gamma_0$, uniqueness holds for the distribution of discretely stationary solutions to $\mathbf{(E_\gamma)}$.
\end{lemme}
\begin{proof} $(i)$\\
\noindent\textbf{Step 1}: 
Let $(\bar{X}_{n\gamma})$ and $(\Sigma_{n\gamma})$ be defined by: 
\begin{align}
&\label{schemex}\bx_0=x,\quad 
\bx_{\Gnp}=\bx_\Gn+\gamma b(\bx_\Gn)+\Delta_{n+1} \quad \textnormal{and},\\
&\bz_0=x,\quad 
\bz_{\Gnp}=\bz_\Gn-\gamma \bz_\Gn+\Delta_{n+1}\nonumber.
\end{align}
with $\Delta_n=Z_{n\gamma}-Z_{(n-1)\gamma}.$
Then,
\begin{align*}
|\bar{X}_{(n+1)\gamma}-&\Sigma_{(n+1)\gamma}|^2=|\bar{X}_{{n\gamma}}-\Sigma_{{n\gamma}}|^2+2\gamma\psg b(\bar{X}_{n\gamma})+\Sigma_{n\gamma},\bar{X}_{n\gamma}-\Sigma_{n\gamma}\psd
+\gamma^2|b(\bar{X}_{n\gamma})+\Sigma_{n\gamma}|^2\\
&\quad\le|\bar{X}_{{n\gamma}}-\Sigma_{{n\gamma}}|^2+2\gamma\psg b(\bar{X}_{n\gamma})-b(\Sigma_{n\gamma}),\bar{X}_{n\gamma}-\Sigma_{n\gamma}\psd
+2\gamma^2|b(\bar{X}_{n\gamma})-b(\Sigma_{n\gamma})|^2\\
&\quad+2\gamma\psg b(\Sigma_{n\gamma})+\Sigma_{n\gamma},\bar{X}_{n\gamma}-\Sigma_{n\gamma}\psd+2\gamma^2|b(\Sigma_{n\gamma})+\Sigma_{n\gamma}|^2.
\end{align*}
On the one hand, using that $b$ is  Lipschitz continuous and assumption $\mathbf{(H_3)}$, one obtains:
\begin{equation}\label{firstpa}
\gamma\psg b(\bar{X}_{n\gamma})-b(\Sigma_{n\gamma}),\bar{X}_{n\gamma}-\Sigma_{n\gamma}\psd
+2\gamma^2|b(\bar{X}_{n\gamma})-b(\Sigma_{n\gamma})|^2\le \gamma\left(\beta+|\bar{X}_{n\gamma}-\Sigma_{n\gamma}|^2(-\alpha+C\gamma)\right).
\end{equation}
On the other hand, using that $b$ is a sublinear function and  the elementary inequality $\psg u,v\psd\le1/2(\varepsilon^{-1}|u|^2+\varepsilon|v|^2)$ (with $u=b(\Sigma_{n\gamma})+\Sigma_{n\gamma}$, $v=\bar{X}_{n\gamma}-\Sigma_{n\gamma}$ and $\varepsilon=\alpha/2$), one also has:
\begin{equation}\label{secondpa}
\gamma\psg b(\Sigma_{n\gamma})+\Sigma_{n\gamma},\bar{X}_{n\gamma}-\Sigma_{n\gamma}\psd+2\gamma^2|b(\Sigma_{n\gamma})+\Sigma_{n\gamma}|^2\le
\gamma\frac{\alpha}{2}|\bar{X}_{n\gamma}-\Sigma_{n\gamma}|^2+C\gamma(1+|\Sigma_{n\gamma}|^2).
\end{equation}
Therefore, the combination of \eqref{firstpa} and \eqref{secondpa} yields for sufficiently small $\gamma$:
$$|\bar{X}_{(n+1)\gamma}-\Sigma_{(n+1)\gamma}|^2\le (1-\tilde{\alpha}\gamma)|\bar{X}_{n\gamma}-\Sigma_{n\gamma}|^2+C\gamma(1+|\Sigma_{n\gamma}|^2)$$
where $\tilde{\alpha}$ is a positive number.
Then, it follows from Lemma \ref{lemme2},
$$\ES[|\bar{X}_{(n+1)\gamma}-\Sigma_{(n+1)\gamma}|^2]\le (1-\tilde{\alpha}\gamma)\ES[|\bar{X}_{n\gamma}-\Sigma_{n\gamma}|^2]+\tilde{\beta}\gamma$$
where $\tilde{\beta}$ does not depend on $\gamma$. By induction, we obtain:
$$\sup_{n\ge1}\ES[|\bar{X}_{n\gamma}-\Sigma_{n\gamma}|^2]\le \tilde{\beta}\gamma\sum_{k=0}^{+\infty}(1-\tilde{\alpha}\gamma)^{k}=\frac{\tilde{\beta}}{\tilde{\alpha}}<+\infty.$$
Finally, since
$$\ES[|\bar{X}_{n\gamma}|^2]\le 2\left(\ES[|\bar{X}_{n\gamma}-\Sigma_{n\gamma}|^2]+\ES[|\Sigma_{n\gamma}|^2]\right),
$$
it follows from Lemma \ref{lemme2} that there exists $\gamma_0>0$ such that
\begin{equation}\label{controleX}
\sup_{0<\gamma\le\gamma_0}\sup_{n\ge1}\ES[|\bar{X}_{n\gamma}|^2]<+\infty.
\end{equation}
\textbf{Step 2}: First, since $\sup_{n\ge1}\frac{1}{n}\sum_{k=1}^n|\bar{X}_{(k-1)\gamma}|^2=\sup_{n\ge1}{\cal P}_0^{(n,\gamma)}(\omega,|x|^2)<+\infty$ $a.s.$ (by Lemma \ref{lemme3}), the fact that ${\cal P}^{(\infty,\gamma)}(\omega,d\alpha)$ is $a.s.$ a weak limit of ${\cal P}^{(n,\gamma)}(\omega,d\alpha)$ implies that
$$\ES_\omega[|Y^{(\infty,\gamma)}_0|^2]<+\infty\quad a.s.$$
By Proposition \ref{prop2}(b),  there exists $a.s.$ a Gaussian process $Z^\omega$ with the same distribution as the driving process of the SDE such that
\begin{equation}\label{trop}
Y^{(\infty,\gamma)}_{(n+1)\gamma}=Y^{(\infty,\gamma)}_{n\gamma}+\gamma b(Y^{(\infty,\gamma)}_{n\gamma})+ \Delta_{n+1}.
\end{equation}
with $\Delta_n:=Z^{\omega}_{n\gamma}-Z^{\omega}_{(n-1)\gamma}$. Moreover, by Remark \ref{DSSF}, $\ES_\omega[|Y^{(\infty,\gamma)}_{n\gamma}|^2]$ does not depend on $n$ since the sequence $(Y^{(\infty,\gamma)}_{n\gamma})$ is stationary.
Let now $(\bar{X}_{n\gamma}^x)$ be constructed as in \eqref{schemex}  with sequence $(\Delta_n)$ of \eqref{trop}.
By \eqref{controleX}, the lemma will be true if we are able to show that for sufficiently small $\gamma$,
\begin{equation}\label{asympconf}
\limsup_{n\rightarrow+\infty}\ES_\omega[|\bar{X}_{n\gamma}^x-Y^{(\infty,\gamma)}_{n\gamma}|^2]<C
\end{equation}
where $C$ does not depend on $\gamma$.
The process of the proof of \eqref{asympconf} is quite similar to  Step 1. First, using assumption $\mathbf{(H_3)}$, one checks that:
$$|\bar{X}_{(n+1)\gamma}-Y^{(\infty,\gamma)}_{(n+1)\gamma}|^2\le |\bar{X}_{n\gamma}-Y^{(\infty,\gamma)}_{n\gamma}|^2(1-\alpha\gamma+C\gamma^2)+\beta\gamma\quad a.s.$$
For sufficiently small $\gamma$, $\alpha\gamma-C\gamma^2\ge\gamma\alpha/2$. Setting $\tilde{\alpha}=\alpha/2$, one derives from an induction that:
\begin{equation*}
\ES_\omega[|\bar{X}_{(n+1)\gamma}-Y^{(\infty,\gamma)}_{(n+1)\gamma}|^2]\le (1-\hat{\alpha}\gamma)^n\ES_\omega[|\bar{X}_{0}^x-{Y}^{(\infty,\gamma)}_{0}|^2]+\beta\gamma\sum_{k=0}^{n-1}(1-\hat{\alpha}\gamma)^k\rightarrow\frac{\beta}{\hat{\alpha}}.
\end{equation*}
This concludes the proof of $(i)$.\\

\noindent (ii) First, we prove uniqueness for the distribution of a stationary solution to SDE \eqref{fractionalSDE}: let $(Y_{t,1})_{t\ge0}$ and $({Y}_{t,2})_{t\ge0}$ be some stationary solutions to  \eqref{fractionalSDE}  
driven respectively by  $Z^1$ and $Z^2.$ 
We want to show that for every $T>0$, for every bounded Lipschitz\footnote{for the standard distance $\delta$  defined for every $\alpha,\beta\in{\cal C}([0,T],\ER^d)$ by $\delta(\alpha,\beta)=\sup_{t\in[0,T]}|\alpha_t-\beta_t|.$} continuous functional $F:{\cal C}([0,T],\ER^d)\rightarrow\ER$,
\begin{equation}\label{egalloi}
\ES[F(Y_{t,1},0\le t\le T)]=\ES[F({Y}_{t,2}),0\le t\le T)].
\end{equation}
Let  $({X}_{t,1}^x)$ and $({X}_{t,2}^x)$ be some solutions to \eqref{fractionalSDE} starting from $x$ and built with the previous driving processes $Z^1$ and $Z^2$ respectively. First, since $b$ is Lipschitz continuous, a classical argument shows that weak uniqueness holds for solutions to \eqref{fractionalSDE} starting from any deterministic $x\in\ER^d$. As a consequence, $X_{.,1}^x$ and $X_{.,2}^x$ have the same distribution on ${\cal C}(\ER_+,\ER^d)$.  Thus, using that $Y_{.,1}$ and $Y_{.,2}$ are stationary, we obtain that  for every $s\ge0$:
 \begin{align*}
 \ES[F(Y_{.,1})]-\ES[F(Y_{.,2})]&=\ES[F(Y_{t+s,1},0\le t\le T)]-\ES[F({X}_{t+s,1}^x,0\le t\le T)]\\
 &+\ES[F({X}_{t+s,2}^x,0\le t\le T)]-\ES[F(Y_{t+s,2},0\le t\le T)].
 \end{align*}
 Since $F$ is a bounded Lipschitz continuous functional, it follows that for every $s\ge0$, 
 \begin{align*}
 |\ES[F(Y_{.,1})]-\ES[F(Y_{.,2})]|\le C\sum_{i=1}^{2} \ES[\sup_{t\in[s,s+T]}| Y_{t,i}-X_{t,i}^x|\wedge 1].
 \end{align*} 
 In order to obtain \eqref{egalloi}, it is now enough to prove that 
 
$$ \sup_{t\ge s}| Y_{t,i}-X_{t,i}^x|\xrn{s\nrn}0\quad a.s.,\quad i=1,2.$$
 Set $V_{t}^i= |Y_{t,i}-X_{t,i}^x|^2$. We have:
 $$dV_{t}^i=2\psg b(Y_{t,i})-b(X_{t,i}^x),Y_{t,i}-X_{t,i}^x\psd.$$
Thus, it follows from $\mathbf{(H_3)}$ with $\beta =0$ and from the Gronwall lemma that,
$$|Y_{t,i}-X_{t,i}^x|^2\le (Y_0-x)^2\exp(-2\alpha t).$$
Therefore,
$$ \sup_{t\ge s}| Y_{t,i}-X_{t,i}^x|^2\le (Y_0-x)^2\exp(-2\alpha s) \xrn{s\nrn}0\quad a.s.,\quad i=1,2.$$
This concludes the proof of the uniqueness for the distribution of a stationary solution to \eqref{fractionalSDE}. For Equation $\mathbf{(E_\gamma)}$, the proof is a straightforward adaptation of the previous one. Details are left to the reader.
\end{proof} 
\begin{prop}\label{propid2} Assume $\mathbf{(H_1)}$ and $\mathbf{(H_3)}$. Then, there exists $\gamma_0>0$ such that $a.s.$
$({\cal P}^{(\infty,\gamma)}(\omega,d\alpha))_{\gamma\in(0,\gamma_0)}$ is relatively compact for the topology of uniform convergence on compact sets.
Furthermore, any weak limit of  $({\cal P}^{(\infty,\gamma)}(\omega,d\alpha))_{\gamma\in(0,\gamma_0)}$ (when $\gamma\rightarrow0$) is the distribution of a stationary solution to SDE \eqref{fractionalSDE}.
\end{prop}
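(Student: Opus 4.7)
The plan is to split the statement into two parts: (a) tightness (relative compactness) of the family $({\cal P}^{(\infty,\gamma)}(\omega,d\alpha))_{\gamma\in(0,\gamma_0)}$ for the topology of uniform convergence on compact sets, and (b) identification of every weak limit as (the law of) a stationary solution to \eqref{fractionalSDE}. Throughout, by Proposition~\ref{prop2}, I realize ${\cal P}^{(\infty,\gamma)}(\omega,\cdot)$ as the law of a c\`adl\`ag process $Y^{(\infty,\gamma)}$ satisfying, for a Gaussian process $Z^\omega$ equal in law to $Z$,
\begin{equation*}
Y^{(\infty,\gamma)}_t - Y^{(\infty,\gamma)}_0 - \int_0^{\underline{t}_\gamma} b(Y^{(\infty,\gamma)}_s)\, ds = Z^\omega_{\underline{t}_\gamma}, \qquad \underline{t}_\gamma := \gamma[t/\gamma].
\end{equation*}

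For (a), I apply an Ascoli/Kolmogorov-type tightness criterion for probability measures on ${\cal C}(\ER_+,\ER^d)$: it suffices (a.s.\ in $\omega$) to control initial marginals and a modulus of continuity. The first is supplied directly by Lemma~\ref{lemme4}(i), which guarantees $\sup_{\gamma\le\gamma_0}\ES_\omega[|Y^{(\infty,\gamma)}_0|^2]<+\infty$ a.s.; together with the discrete stationarity of $Y^{(\infty,\gamma)}$ (Remark~\ref{DSSF}) this yields a uniform-in-$(t,\gamma)$ bound on $\ES_\omega[|Y^{(\infty,\gamma)}_t|^2]$. For the modulus, write for $s<t$
\begin{equation*}
|Y^{(\infty,\gamma)}_t-Y^{(\infty,\gamma)}_s| \le \Bigl|\int_{\underline{s}_\gamma}^{\underline{t}_\gamma} b(Y^{(\infty,\gamma)}_u)\,du\Bigr| + |Z^\omega_{\underline{t}_\gamma}-Z^\omega_{\underline{s}_\gamma}|.
\end{equation*}
The drift term is handled using the sublinear growth $\mathbf{(H_2)}(i)$ (which follows from $\mathbf{(H_3)}$) combined with the uniform second-moment bound established above. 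The Gaussian term is controlled, uniformly in $\gamma$, by the bound on the moment of the supremum of a Gaussian increment proved in the Appendix, which relies only on the local control of $c_i$ near $0$ from $\mathbf{(H_1)}$. This produces the required uniform modulus. Note that although the $Y^{(\infty,\gamma)}$ are c\`adl\`ag with jumps at the grid points $\Gn$, the jump sizes are $|\Delta_{n+1}|$, which satisfy $\ES[|\Delta_{n+1}|^2]\le C c_i(\gamma)\to 0$ by \eqref{consequence-cov-bounds}, so any weak limit is supported on ${\cal C}(\ER_+,\ER^d)$.

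For (b), let $(\gamma_k)\downarrow 0$ be such that ${\cal P}^{(\infty,\gamma_k)}(\omega,\cdot)\to Q$ weakly (for uniform convergence on compact sets). By Skorokhod representation, I can realize copies $\tilde Y^{(\infty,\gamma_k)}\to Y^{(\infty,0)}$ almost surely uniformly on compact sets, with ${\cal L}(Y^{(\infty,0)})=Q$ and sample paths continuous. Since $\underline{t}_{\gamma_k}\to t$ and $b$ is continuous, the uniform-on-compact convergence of $\tilde Y^{(\infty,\gamma_k)}$ combined with dominated convergence gives
\begin{equation*}
\int_0^{\underline{t}_{\gamma_k}} b(\tilde Y^{(\infty,\gamma_k)}_s)\, ds \longrightarrow \int_0^t b(Y^{(\infty,0)}_s)\, ds.
\end{equation*}
Hence the process $N^{(\infty,\gamma_k)}_t = \tilde Y^{(\infty,\gamma_k)}_t-\tilde Y^{(\infty,\gamma_k)}_0-\int_0^{\underline{t}_{\gamma_k}} b(\tilde Y^{(\infty,\gamma_k)}_s)\,ds$ converges a.s.\ (uniformly on compacts) to $Y^{(\infty,0)}_t-Y^{(\infty,0)}_0-\int_0^t b(Y^{(\infty,0)}_s)\,ds$. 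On the other hand, $N^{(\infty,\gamma_k)}\overset{\cal L}{=}\bar Z^{\gamma_k}\to Z$ in distribution (by continuity of $Z$'s paths), which yields \eqref{eq-ids}. Finally, for stationarity: given $h\ge 0$ and $0\le t_1<\cdots<t_n$, put $l_k=[h/\gamma_k]$ so that $l_k\gamma_k\to h$. The discrete stationarity (Remark~\ref{DSSF}) gives
\begin{equation*}
(\tilde Y^{(\infty,\gamma_k)}_{t_1+l_k\gamma_k},\ldots,\tilde Y^{(\infty,\gamma_k)}_{t_n+l_k\gamma_k}) \overset{\cal L}{=} (\tilde Y^{(\infty,\gamma_k)}_{t_1},\ldots,\tilde Y^{(\infty,\gamma_k)}_{t_n}),
\end{equation*}
and passing to the limit using path-continuity of $Y^{(\infty,0)}$ yields $(Y^{(\infty,0)}_{t_1+h},\ldots,Y^{(\infty,0)}_{t_n+h})\overset{\cal L}{=}(Y^{(\infty,0)}_{t_1},\ldots,Y^{(\infty,0)}_{t_n})$.

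The main obstacle is the uniform-in-$\gamma$ modulus of continuity in step (a): one has to estimate the oscillations of the underlying Gaussian process on small intervals using only the behavior of $c_i$ near $0$ in $\mathbf{(H_1)}$ (this is precisely what is proved in the Appendix), and separately treat the drift contribution by exploiting the uniform second-moment bound coming from Lemma~\ref{lemme4}(i) together with the discrete stationarity. Everything else is a matter of passing to the limit carefully, the only subtlety being the almost-sure nature of the tightness and convergence statements (i.e.\ holding for a.e.\ $\omega$), which is ensured since all the a.s.\ statements used (ergodicity of $(\Delta_n)$, Lemmas~\ref{lemme3} and~\ref{lemme4}(i)) hold on a common event of full probability.
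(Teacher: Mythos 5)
Your proposal is correct and follows essentially the same route as the paper: tightness via the uniform second-moment bound of Lemma~\ref{lemme4}(i) combined with discrete stationarity, the drift/Gaussian-increment decomposition with the Appendix bound on $\ES[\sup_{s\in[0,\delta]}|Z_s|^p]\le C\delta^{1+\rho}$, and then identification of the limit through the representation of Proposition~\ref{prop2} and the shift $\underline{t}_\gamma\to t$. The only minor stylistic difference is that in the identification step you invoke the Skorokhod representation theorem to argue pathwise, whereas the paper stays at the level of laws, using continuity of $\Phi(\alpha)_t=\alpha_t-\alpha_0-\int_0^t b(\alpha_s)ds$ for the local uniform topology and showing that the remainder $R^{\gamma}_t=-\int_{\underline{t}_\gamma}^{t} b(Y^{(\infty,\gamma)}_s)ds$ tends to $0$ in probability.
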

\begin{proof}
\vspace{0.1cm}
\noindent\textbf{Step 1:} A.s. tightness of $({\cal P}^{(\infty,\gamma)}(\omega,d\alpha))$: For $\omega\in\Omega$, we recall that $Y^{(\infty,\gamma)}$ is a càdlàg  process with distribution ${\cal P}^{(\infty,\gamma)}(\omega,d\alpha)$.
According to Theorem VI.3.26 of \cite{jacodshiryaev}, we have to show the two following points:
\begin{itemize}
\item{} For every $T>0$, there exists $\gamma_0>0$ such that
\begin{equation}\label{point1}
\limsup_{K\rightarrow+\infty}\sup_{\gamma\in(0,\gamma_0]}\PE(\sup_{t\in[0,T]}|Y^{(\infty,\gamma)}_t|>K)=0.
\end{equation}
\item{} For every positive $T$, $\varepsilon$ and $\eta$, there exist $\delta>0$ and $\gamma_0>0$  such that for every $\gamma\le\gamma_0$,
\begin{equation}\label{point2}
\PE(\sup_{|t-s|\le\delta,0\le s\le t\le T}|Y^{(\infty,\gamma)}_t-Y^{(\infty,\gamma)}_s|\ge\varepsilon)
\le\eta.
\end{equation}
\end{itemize}
First, we focus on \eqref{point1}. Let $K>0$. By Proposition \ref{prop2}, we have:
\begin{align*}
 \PE(\sup_{t\in[0,T]}|Y^{(\infty,\gamma)}_t|>K)&\le \PE\left(|Y^{(\infty,\gamma)}_0|+\int_0^T |b(Y^{(\infty,\gamma)}_s)|ds+\sup_{t\in[0,T]}| \bar{Z}^{\gamma}_t|>{K}\right).
\end{align*}
Using the Markov inequality, it follows that
$$\PE(\sup_{t\in[0,T]}|Y^{(\infty,\gamma)}_t|>K)\le\frac{1}{K}\left(\ES[|Y^{(\infty,\gamma)}_0|]+
C T\sup_{n\in\{0,\ldots,[T/\gamma]\}}\ES[|Y^{(\infty,\gamma)}_{n\gamma}|]+\ES[\sup_{t\in[0,T]}|Z_t|]\right)$$
where $\|\sigma\|=\sup\{|\sigma x|/|x|,x\in\ER^d\}.$ Now, since $(Y_{n\gamma})$ is a stationary sequence and $\sup_{t\in[0,T]}|Z_t|$ is integrable (see Proposition \ref{propcontrolesup}), one obtains:
$$\PE(\sup_{t\in[0,T]}|Y^{(\infty,\gamma)}_t|>K)\le\frac{C}{K}\left(1+\ES[|Y^{(\infty,\gamma)}_0|]\right),$$
where $C$ does not depend on $\gamma$. Finally, the first point follows from Lemma \ref{lemme4}.\\

\noindent Let us now prove \eqref{point2}. In fact, using for instance proof of Theorem 8.3 of \cite{billingsley}, it is enough to show that for every
positive  $\varepsilon,\eta$ and  $T$, there exist $\delta>0$ and
$\gamma_0>0$ such that for every $\gamma\le\gamma_0$:
\begin{equation}\label{ty}
\frac{1}{\delta}\mathbb{P}(\sup_{t\le s\le
t+\delta}|Y^{(\infty,\gamma)}_t-Y^{(\infty,\gamma)}_s|\ge\varepsilon)\le\eta\qquad\forall
\gamma\le\gamma_0\quad\textnormal{and}\quad 0\le  t\le T.
\end{equation}
By the Markov inequality, we have for every $p\ge1$:
\begin{align}
\label{inf1}\mathbb{P}\left(\sup_{t\le s\le
t+\delta}|Y^{(\infty,\gamma)}_t-Y^{(\infty,\gamma)}_s|\ge\varepsilon\right)&\le\left(\frac{2}{\varepsilon}\right)^2\ES\left[\left(\int_{\underline{t}_\gamma}^{\underline{t+\delta}_\gamma}|b(Y^{(\infty,\gamma)}_s)|ds\right)^2\right]\\&+\left(\frac{2}{\varepsilon}\right)^p\ES\left[\sup_{s\in[t,t+\delta]}|\bar{Z}_s^{\gamma}-\bar{Z}_t^{\gamma}|^p\right]\nonumber
\end{align}
On the one hand,
\begin{align*} 
\ES\left[\left(\int_t^{t+\delta}|b(Y^{(\infty,\gamma)}_s)|ds\right)^2\right]&\le \ES\left[\left(\sum_{k=[t/\gamma]}^{[(t+\delta)/\gamma]}\sqrt{\gamma}(\sqrt{\gamma}|b(Y^{(\infty,\gamma)}_{n\gamma})|)\right)^2\right]\\&\le\ES\left[\left(\sum_{k=[t/\gamma]}^{[(t+\delta)/\gamma]}\gamma\right)\left(\sum_{k=[t/\gamma]}^{[(t+\delta)/\gamma]}\gamma|b(Y^{(\infty,\gamma)}_{n\gamma})|^2\right)\right]
\end{align*}
thanks to the Cauchy-Schwarz inequality. Now, when $\gamma$ is sufficiently small $$\sum_{k=[t/\gamma]}^{[(t+\delta)/\gamma]}\gamma\le 2\delta.$$ Therefore, using also the fact that $b$ has sublinear growth yields:
\begin{equation}
\ES\left[\left(\int_t^{t+\delta}|b(Y^{(\infty,\gamma)}_s)|ds\right)^2\right]\le C\delta^2(1+\sup_{k\in\{[\frac{t}{\gamma}],\ldots,[\frac{(t+\delta)}{\gamma}]\}}\ES[|Y^{(\infty,\gamma)}_{k\gamma}|^2])\le C\delta^2(1+\ES[|Y^{(\infty,\gamma)}_{0}|^2])\label{inf2}
\end{equation}
thanks to the stationarity of $(Y^{(\infty,\gamma)}_{n\gamma})_{n\ge0}$.\\
\noindent On the other hand, we deduce from the stationarity of the increments of $(Z_t)_{t\ge0}$ that
\begin{equation*}
\ES\left[\sup_{s\in[t,t+\delta]}|\bar{Z}_s^{\gamma}-\bar{Z}_t^{\gamma}|^p\right]\le \ES\left[\sup_{s\in[t,t+\delta]}|Z_s-Z_t|^p\right]
\le \ES[\sup_{s\in[0,\delta]}|Z_s|^p].
\end{equation*}
Thus, by  Proposition \ref{propcontrolesup} (see Appendix), for sufficiently large $p$, 
\begin{equation}\label{inf3}
\ES\left[\sup_{s\in[t,t+\delta]}|\bar{Z}_s^{\gamma}-\bar{Z}_t^{\gamma}|^p\right]\le C\delta^{1+\rho}
\end{equation}
where $\rho$ is a positive number.\\
Then, the combination of \eqref{inf1}, \eqref{inf2} and \eqref{inf3} yields for sufficiently small $\gamma$:
$$\mathbb{P}\left(\sup_{t\le s\le
t+\delta}|Y^{(\infty,\gamma)}_t-Y^{(\infty,\gamma)}_s|\ge\varepsilon\right)\le C\delta^{2\wedge(1+\rho)}$$
and \eqref{point2} follows from Lemma \ref{lemme4}.\\

\noindent \textbf{Step 2:} We want to show that, $a.s$, any weak limit ${\cal P}(\omega,d\alpha)$ of $({\cal P}^{(\infty,\gamma)}(\omega,d\alpha))_{\gamma}$ when $\gamma\rightarrow0$ (for the uniform convergence topology)
is the distribution of a stationary process. Let $f:\ER^r\rightarrow\ER$ be a bounded continuous function and let $t>0$ and $t_1,\ldots, t_r$ such that $0\le t_1<\ldots<t_r.$  Denoting by $(Y_t)_{t\ge0}$ a process with distribution 
${\cal P}(\omega,d\alpha)$, we have to show that:
\begin{equation}\label{statcond}
\ES[f(Y_{t_1+t},\ldots,Y_{t_r+t})]=\ES[f(Y_{t_1},\ldots,Y_{t_r})].
\end{equation}
First, since ${\cal P}(\omega,d\alpha)$ is a weak limit of $({\cal P}^{(\infty,\gamma)}(\omega,d\alpha))_{\gamma}$, there exist some sequences  $(\gamma_n)_{n\ge0}$ and $(Y^{(\infty,\gamma_n)})_{n\ge0}$  such that 
${\cal L}(Y^{(\infty,\gamma_n)})={\cal P}^{(\infty,\gamma_n)}(\omega,d\alpha)$ and $(Y^{(\infty,\gamma_n)}_t)$ converges weakly to $(Y_t)$ for the weak topology induced by the uniform convergence topology on compact sets on $\mathbb{D}(\ER_+,\ER^d)$.
In particular, 
\begin{align}\label{convstat}
&\ES[f(Y^{(\infty,\gamma_n)}_{t_1},\ldots,Y^{(\infty,\gamma_n)}_{t_r})]\xrn{n\nrn}\ES[f(Y_{t_1},\ldots,Y_{t_r})]\quad\textnormal{and,}\\
&\ES[f(Y^{(\infty,\gamma_n)}_{t_1+\underline{t}_{\gamma_n}},\ldots,Y^{(\infty,\gamma_n)}_{t_r+\underline{t}_{\gamma_n}})]\xrn{n\nrn}\ES[f(Y_{t_1+t},\ldots,Y_{t_r+t})].
\end{align}
since $\underline{t}_{\gamma_n}:=\gamma_n[t/\gamma_n]\rightarrow t$ when $n\rightarrow+\infty$. Now, by Proposition \ref{prop2},
$$\ES[f(Y^{(\infty,\gamma_n)}_{t_1},\ldots,Y^{(\infty,\gamma_n)}_{t_r})]=\ES[f(Y^{(\infty,\gamma_n)}_{t_1+\underline{t}_{\gamma_n}},\ldots,Y^{(\infty,\gamma_n)}_{t_r+\underline{t}_{\gamma_n}})]\quad\forall n\ge1.$$
\eqref{statcond} follows.\\

\noindent \textbf{Step 3:} Let $\Phi:\mathbb{D}(\ER_+,\ER^d)\rightarrow\mathbb{D}(\ER_+,\ER^d)$ be defined by
$$(\Phi(\alpha))_t=\alpha_t-\alpha_0-\int_0^t b(\alpha_s)ds.$$
With the notations of Step 2, we want to show that $Y:=(Y_t)_{t\ge0}$ is a solution to \eqref{fractionalSDE}, $i.e.$ that $\Phi(Y)$
is equal in law to $ Z:=( Z_t)_{t\ge0}$. Let $(\gamma_n)$ and $(Y^{(\infty,\gamma_n)})_{n\ge0}$ be defined as in Step 2. Then, since $\Phi$ is continuous for the uniform convergence topology on compact sets, 
\begin{equation}\label{uyi1}
\Phi(Y^{(\infty,\gamma_n)})\overset{n\rightarrow+\infty}{\Longrightarrow} \Phi(Y),
\end{equation}
for the weak topology induced by the uniform convergence topology on compact sets. Therefore, we have to prove that
\begin{equation}\label{identfrac}
\Phi(Y^{(\infty,\gamma_n)})\overset{n\rightarrow+\infty}{\Longrightarrow} Z,
\end{equation}
for this topology. With the notations of Proposition \ref{prop2},
\begin{equation}\label{uyi2}
\Phi(Y^{(\infty,\gamma_n)})=N^{(\infty,\gamma_n)}+R^{\gamma_n}\quad\textnormal{where}\quad R_t^{\gamma_n}=-\int_{\underline{t}_{\gamma_n}}^t b(Y_s^{(\infty,\gamma_n)})ds.
\end{equation}
First, since $b$ is sublinear and  $t-\underline{t}_{\gamma_n}\le\gamma_n$, we have for every $T>0$:
$$|R^{\gamma_n}_t|\le C\gamma_n(1+\sup_{t\in[0,T]}|Y_t^{(\infty,{\gamma_n})}|)\quad\forall t\in[0,T].$$
Now, in Step 1, we showed that   $(\sup_{t\in[0,T]}|Y_t^{(\infty,\gamma)}|)_{\gamma\in(0,\gamma_0)}$ is tight on $\ER$. It follows easily that
$$\sup_{t\in[0,T]}|R^{\gamma_n}_t|\xrn{n\nrn}0\quad \textnormal{in probability }\forall T>0.$$
Therefore, one derives from \eqref{uyi1} and \eqref{uyi2},
\begin{equation*}
N^{(\infty,\gamma_n)}\overset{n\rightarrow+\infty}{\Longrightarrow} \Phi(Y).
\end{equation*}
Then, it follows from Proposition \ref{prop2} that 
$N^{(\infty,\gamma_n)}$ is a convergent sequence of Gaussian processes such that $N^{(\infty,\gamma_n)}\overset{\cal L}{=} \bar{Z}^{\gamma_n}$. 
This implies the finite-dimensional convergence to $(Z_t)_{t\ge0}$ and concludes the proof.
\end{proof}
\section{Properties of the stationary solution}\label{sect-properties}
In this section,we give some properties of the stationary solution. In Subsection \ref{sub-depend}, we prove  that the random initial value of a stationary solution can only be independent of a Gaussian noise with independent increments. In Subsection \ref{sub-independ}, we prove that however, an independence property holds between the past of the stationary solution and the future of the so-called innovation process. 
\subsection{Dependence between $X_0$ and $Z$}\label{sub-depend}
\begin{prop}\label{dependenceprop} Let $X_0$ and $(Z_t)_{t\ge0}$ denote the random initial value and the driving process of a stationary solution to  \eqref{fractionalSDE}. Then, if $X_0$ is independent of $(Z_t)_{t\ge0}$, then $(Z_t)_{t\ge0}$ has independent increments. As a consequence, $Z=QW$ where $W$ is a standard $d$-dimensional Brownian Motion and $Q$ is a deterministic matrix.
\end{prop}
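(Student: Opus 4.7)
The plan is to reduce the claim to a covariance calculation: show that $Z$ has uncorrelated increments on any two disjoint intervals. Since $Z$ is centered Gaussian, this will imply independence of increments, and a continuous centered Gaussian process on $\ER_+$ with $Z_0=0$ and stationary independent increments must be a Brownian motion up to a constant linear transformation, yielding $Z=QW$.

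The central input is the upgrade from stationarity of $X$ to joint stationarity of $(X,Z)$. The SDE exhibits $Z$ as the deterministic continuous functional $Z_t=X_t-X_0-\int_0^t b(X_s)ds$ of the continuous trajectory $X$. Applying this functional to the shifted trajectory $X_{\cdot+h}$, which has the same law as $X_\cdot$ by stationarity, a change of variable in the integral gives
$$(X_{\cdot+h},Z^{(h)})\overset{\cal L}{=}(X_\cdot,Z),\qquad Z^{(h)}_t:=Z_{t+h}-Z_h.$$
Evaluating the first coordinate at $t=0$ yields $(X_h,Z^{(h)})\overset{\cal L}{=}(X_0,Z)$; in particular, the hypothesis $X_0\perp Z$ upgrades to $X_u\perp Z^{(u)}$ for every $u\ge 0$.

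Fix now $0\le a_1<b_1\le a_2<b_2$ and coordinates $i,j\in\{1,\dots,d\}$. Rewriting $Z^i_{b_1}-Z^i_{a_1}=(X^i_{b_1}-X^i_{a_1})-\int_{a_1}^{b_1}b^i(X_u)du$ and expanding by bilinearity,
$$\text{Cov}(Z^i_{b_1}-Z^i_{a_1},Z^j_{b_2}-Z^j_{a_2})=\text{Cov}(X^i_{b_1}-X^i_{a_1},Z^j_{b_2}-Z^j_{a_2})-\int_{a_1}^{b_1}\text{Cov}(b^i(X_u),Z^j_{b_2}-Z^j_{a_2})du.$$
For every $u\in[a_1,b_1]\subseteq[0,a_2]$, the quantity $Z^j_{b_2}-Z^j_{a_2}=Z^{(u),j}_{b_2-u}-Z^{(u),j}_{a_2-u}$ is a measurable function of $Z^{(u)}$, hence independent of $X_u$ by the previous paragraph; since $Z$ is centered, every term on the right vanishes. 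Thus all disjoint-interval increments of the jointly Gaussian process $Z$ are uncorrelated, and hence independent.

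It remains to conclude: the covariance matrix $\Sigma(t):=\ES[Z_tZ_t^T]$ is continuous and, by stationarity and independence of increments, satisfies $\Sigma(t+s)=\Sigma(t)+\Sigma(s)$, so $\Sigma(t)=tQQ^T$ for some $d\times d$ matrix $Q$; combined with continuity and Gaussianity this identifies $Z$ in law with $QW$ for a standard $d$-dimensional Brownian motion $W$. The only step that requires genuine care is the joint-law identity $(X_{\cdot+h},Z^{(h)})\overset{\cal L}{=}(X_\cdot,Z)$, which relies on reading $Z$ pathwise as a continuous functional of $X$ and transporting stationarity through that map; everything else is routine covariance bookkeeping and the classical characterization of Brownian motion.
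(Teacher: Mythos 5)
Your proposal is correct and follows essentially the same route as the paper: you read $Z$ pathwise as the continuous functional $\psi(X)$ of the trajectory, transport the hypothesis $X_0\perp Z$ through stationarity of $X$ to get independence of $X_u$ from the post-$u$ increments of $Z$ (the paper does this with test functions $f,F$, you via the joint-law identity $(X_{\cdot+h},Z^{(h)})\overset{\cal L}{=}(X_\cdot,Z)$, which is the same mechanism), and then kill the increment covariances and invoke Gaussianity. The only difference is cosmetic bookkeeping over general disjoint intervals and your explicit sketch of the final identification $Z=QW$, which the paper merely asserts.
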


\begin{proof} Let  $X:=(X_t)_{t\ge0}$ be a stationary solution  to \eqref{fractionalSDE} and assume that  $X_0$ is independent of $Z:=(Z_t)_{t\ge0}$.  First, note that for every $t\ge0$, $Z_{t+\,\cdot}-Z_t=\psi(X_{t+\,\cdot})$ where $\psi:{\cal C}(\ER_+,\ER^d)\rightarrow {\cal C}(\ER_+,\ER^d)$ is defined for every $\alpha\in{\cal C}(\ER_+,\ER^d)$ by
$$ \psi(\alpha)_t=\alpha_t-\alpha_0-\int_0^t b(\alpha_s)ds\quad \forall t\ge0,$$
is continuous. Then, since $(X_t)$ is stationary, it follows  that for every bounded continuous functional  $F$ (for the topology of uniform convergence on compact sets), for every $s\ge0$, for every $t\ge0$, and every bounded continuous function $f:\ER^d\rightarrow\ER$,
\begin{align*}
\ES[\left(f(X_0)-\ES[f(X_0)]\right) F(Z_{s+u}-Z_s&,u\ge0)]=\ES[\left(f(X_t)-\ES[f(X_t)]\right) F(\psi(X_{t+s+\,\cdot}))]\\&=\ES[\left(f(X_t)-\ES[f(X_t)]\right) F(Z_{t+s+u}-Z_{t+s},u\ge0)].
\end{align*}
Since $X_0$ is independent of $Z$,  
$$\ES[\left(f(X_0)-\ES[f(X_0)]\right) F(Z_{s+u}-Z_s,u\ge0)]=0.$$
This implies that
$$\ES[f(X_t)F(Z_{t+s+u}-Z_t,u\ge0)]=\ES[f(X_t)]\ES[F(Z_{t+s+u}-Z_{t+s},u\ge0)].$$
One deduces that for every $s,t\ge0$, such that $0\le s\le t$, $X_s$ is independent of $(Z_{t+u}-Z_t)_{u\ge0}$. 
As a consequence, for every positive $u,t$, for every $i,j\in\{1,\ldots,d\}$,
\begin{align*}
\ES[Z_t^i(Z_{t+u}^j-&Z_{t}^j)]\\
&=\left(\ES[X_t^i(Z_{t+u}^j-Z_{t}^j)]-\int_0^t\ES[b^i(X_v)(Z_{t+u}^j-Z_{t}^j)]dv-\ES[X_0^i(Z_{t+u}^j-Z_{t}^j)]\right),\\
&=\left(\ES[X_t^i]\ES[Z_{t+u}^j-Z_{t}^j]-\int_0^t\ES[b^i(X_v)]\ES[Z_{t+u}^j-Z_{t}^j]dv-\ES[X_0^i]\ES[Z_{t+u}^j-Z_{t}^j]\right),\\
&=0.
\end{align*}
Since $Z$ is a centered Gaussian process, it clearly implies that $Z$ has independent increments.
\end{proof}
\subsection{Independency to the future of the underlying innovation process}\label{sub-independ}
In order to ensure a physical sense to the stationary solutions built with our discrete approach, it may be important to check that for every $t\ge0$, $\sigma(X_s,0\le s\le t)$ does not depend on the \textit{innovation} generated by the \textit{future} of the Gaussian process $Z$ after $t$ (see below for a precise definition). In the simple case where $Z$ is a Brownian motion (whose increments are independent), it is natural to define the innovation after $t$ as the $\sigma$-field generated by the increments of the Brownian motion after $t$. Then, a stationary solution of a SDE driven by $Z$ is  meaningless if $\sigma(X_s,0\le s\le t)$ is not independent of the increments of $Z$ after $t$. This is the case for $(X_t)_{t\ge0}$ defined by
$$X_t=-\int_t^{+\infty}e^{t-s}dZ_s$$
which is a stationary solution (in the sense of Definition \ref{stat-sol}) to $dX_t=X_tdt+dZ_t$ but whose initial value $X_0=-\int_0^{+\infty} e^{-s}dZ_s$ 
depends on all the future of $Z$ (Note that $(X_t)_{t\ge0}$ does not satisfy Assumption $\mathbf{(H_2)}$). Thus, the aim of this section is to show that our construction of stationary solutions as weak limits of ergodic Euler schemes does not generate such a type of stationary solutions.\\
This section is divided in two parts. We focus successively on the discrete case, $i.e.$ on the stationary solutions to $\mathbf{(E_\gamma)}$ and on the continuous case, $i.e.$ to the stationary solutions to \eqref{fractionalSDE}. Oppositely to the rest of the paper, we will need in the two following parts to introduce series or integral representations in order to define the innovation of $(\Delta_n)$ and $Z$ respectively. We will also assume that $d=1$.\\

\noindent \textbf{The discrete case.} Assume that $(\Delta_n)_{n\ge1}$ is a purely non-deterministic sequence. Then, by Theorem 3.2 of \cite{hida-hitsuda}, there exists a sequence of real numbers denoted by $(a_{k,\gamma})_{k\ge0}$ such that $(\Delta_n)_{n\ge1}$ admits the following representation:
\begin{equation}\label{rep-discrete}
\Delta_n=\sum_{k=0}^{+\infty} a_{k,\gamma} \xi_{n-k}\quad a.s., \quad\forall n\ge1,
\end{equation}
where  $(\xi_k)_{k\in\mathbb{Z}}$, is a sequence of $i.i.d.$ centered real-valued Gaussian variables such that ${\rm Var}(\xi_1)=1$. The sequence $(\xi_k)_{k\in\mathbb{Z}}$ is then called the underlying innovation process associated with $(\Delta_n)_{n\ge1}$. Under the assumptions of Theorem \ref{principal}, we know that for $\gamma$ small enough, there exists a stationary distribution $\bar{{\cal P}}^{\infty,\gamma}$ on $\ER^\mathbb{N}$ to the recursive equation 
\begin{equation}\label{rec-eq-2}
\bar{X}_{(n+1)\gamma}-\bar{X}_{n\gamma}=\gamma b(\bar{X}_{n\gamma})+\Delta_{n+1}\quad\forall n\ge0.
\end{equation}
By Lemma \ref{lemme-verif-const} in the Appendix, this stationary distribution $\bar{\cal P}^{\infty,\gamma}$ can be realized 
as $ \kappa_1(\mu_1),$  where $\mu_1$ denotes the probability measure on $\ER\times\ER^{\mathbb{Z}}$ defined 
by~\eqref{eq:mu1} such that its projection on the second coordinate is ${\cal L}((\xi_n)_{n\in\mathbb{Z}}),$ and 
where $ \kappa_1$ is defined in~\eqref{kappa1}. From now on, we denote this stationary solution by  $({\bar X}^{(\infty,\gamma)}_{n\gamma})_{n\ge0}$.  
In the next proposition, we show that for $\gamma$ small enough, the past of $(\bar{X}^{(\infty,\gamma)}_{n\gamma})_{n\ge0}$ is independent of the future of this innovation process.
\begin{prop}\label{innovation-discrete}
Assume $\mathbf{(H_1)}$ and $\mathbf{(H_{3,0})}$. Let $\gamma_0>0$ such that the conclusions of Theorem \ref{principal} hold for $\gamma\le \gamma_0$ and denote by $\bar{\cal P}^{\infty,\gamma}$ the unique stationary distribution on $\ER^{\mathbb{N}}$ to \eqref{rec-eq-2}. Let $({\bar X}^{\infty,\gamma}_{n\gamma})_{n\ge0}$ denote the realization of $\bar{\cal P}^{\infty,\gamma}$ on $\ER\times\ER^{\mathbb{Z}}$ defined previously. Then, for $\gamma$ small enough, for every  $n\ge 0$, $\sigma(\bar{X}^{(\infty,\gamma)}_0,\ldots,\bar{X}^{(\infty,\gamma)}_{n\gamma})$ is independent to $\sigma(\xi_{k},k\ge n+1)$. 
\end{prop}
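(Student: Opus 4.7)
My plan is to show the stronger measurability statement that $\sigma(\bar X^{(\infty,\gamma)}_0,\ldots,\bar X^{(\infty,\gamma)}_{n\gamma})\subset\mathcal G_n:=\sigma(\xi_j:j\le n)$; the claimed independence is then immediate, because $(\xi_k)_{k\in\mathbb Z}$ is i.i.d.\ so $\mathcal G_n$ is independent of $\sigma(\xi_k:k\ge n+1)$.

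The first observation is that, thanks to the one-sided moving-average representation \eqref{rep-discrete}, one has $\Delta_n=\sum_{k\ge 0}a_{k,\gamma}\xi_{n-k}\in\mathcal G_n$ for every $n\in\mathbb Z$. Hence an immediate induction on the recursion
$$\bar X^{(\infty,\gamma)}_{(k+1)\gamma}=\bar X^{(\infty,\gamma)}_{k\gamma}+\gamma b(\bar X^{(\infty,\gamma)}_{k\gamma})+\Delta_{k+1}$$
shows that, for every $n\ge 1$, $\bar X^{(\infty,\gamma)}_{n\gamma}$ is a measurable function of $\bar X^{(\infty,\gamma)}_0,\Delta_1,\ldots,\Delta_n$, and therefore lies in $\sigma(\bar X^{(\infty,\gamma)}_0)\vee\mathcal G_n$. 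Everything thus boils down to proving that the initial value itself satisfies $\bar X^{(\infty,\gamma)}_0\in\mathcal G_0$.

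This is the main obstacle, and it is precisely where the construction from Lemma \ref{lemme-verif-const} of the Appendix is needed: the realization $\kappa_1(\mu_1)$ of $\bar{\cal P}^{\infty,\gamma}$ on $\mathbb R\times\mathbb R^{\mathbb Z}$ is built so that the first coordinate $\bar X^{(\infty,\gamma)}_0$ is a measurable function of $(\xi_j)_{j\le 0}$ only. To see why such a realization exists, I would use a backward-coupling argument under $\mathbf{(H_{3,0})}$: extending the $\Delta_k$'s to all $k\in\mathbb Z$ via \eqref{rep-discrete}, fix $x\in\mathbb R$ and let $(\bar X^{x,-N}_{k\gamma})_{k\ge -N}$ be the Euler scheme started from $x$ at time $-N\gamma$. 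Then $\bar X^{x,-N}_0$ is a function of $(\Delta_k)_{-N+1\le k\le 0}$ and hence belongs to $\mathcal G_0$ for every $N$. By the same contraction estimate used in the proof of Lemma \ref{lemme4}(ii), for $\gamma$ small enough,
$$\mathbb E\bigl[|\bar X^{x,-N}_0-\bar X^{(\infty,\gamma)}_0|^2\bigr]\le (1-\hat\alpha\gamma)^{N}\,\mathbb E\bigl[|x-\bar X^{(\infty,\gamma)}_{-N\gamma}|^2\bigr]\xrightarrow{N\to\infty}0,$$
the right-hand side being finite thanks to stationarity and \eqref{contloiinitiale}. Thus $\bar X^{(\infty,\gamma)}_0$ is an $L^2$-limit of $\mathcal G_0$-measurable random variables, hence is $\mathcal G_0$-measurable up to a null set, which is exactly the content of Lemma \ref{lemme-verif-const}.

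Combining the two ingredients gives $\sigma(\bar X^{(\infty,\gamma)}_0,\ldots,\bar X^{(\infty,\gamma)}_{n\gamma})\subset\mathcal G_n$, and the independence with $\sigma(\xi_k,k\ge n+1)$ follows at once from the independence of the coordinates of the innovation sequence. The only delicate point in this scheme is the $\mathcal G_0$-measurability of $\bar X^{(\infty,\gamma)}_0$, which requires the strong uniqueness provided by $\mathbf{(H_{3,0})}$ and the quantitative contraction established in Section \ref{identification2}; without $\mathbf{(H_{3,0})}$, several stationary distributions could exist and one could no longer identify the one built by our procedure with the backward-limit realization.
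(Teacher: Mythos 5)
The key step of your plan --- that $\bar X^{(\infty,\gamma)}_0$ is measurable with respect to ${\cal G}_0=\sigma(\xi_j,\,j\le 0)$ --- is not available for the realization named in the statement, and it is not ``the content of Lemma \ref{lemme-verif-const}''. In that lemma, $\mu_1$ is defined in \eqref{eq:mu1} by drawing $X_0$ from the conditional kernel $\pi_1$ of $X_0$ given the \emph{forward} increments ${\cal R}(\xi)=(\Delta_n)_{n\ge1}$, conditionally independently of everything else: $X_0$ is a separate coordinate carrying its own randomness, not a function of $(\xi_j)_{j\le 0}$, nor of $\xi$ at all. For instance, if $a_{0,\gamma}=1$ and $a_{k,\gamma}=0$ for $k\ge1$ (i.i.d.\ increments), then $\pi_1$ is simply the stationary marginal law, so under $\mu_1$ the variable $X_0$ is drawn independently of the whole sequence $\xi$ and is certainly not ${\cal G}_0$-measurable, although the proposition holds in that case. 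You are therefore trying to prove a strictly stronger adaptedness statement which fails for the specified realization: independence of the future innovations is a property of the particular coupling of $(X,\xi)$, not of the stationary law of $X$.

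The backward-coupling argument inherits this flaw and adds a construction gap: it uses $\bar X^{(\infty,\gamma)}_{-N\gamma}$, i.e.\ a two-sided stationary solution driven by the same innovation sequence and solving the recursion at negative times, which is never constructed, and whose compatibility with the one-sided realization $\kappa_1(\mu_1)$ is precisely what would have to be established. Uniqueness under $\mathbf{(H_{3,0})}$ identifies the \emph{law} of the process $X$ only; it cannot transfer an independence (or adaptedness) property from the adapted backward-limit solution to the coupling of Lemma \ref{lemme-verif-const}, since that property concerns the joint law of $(X,\xi)$. The paper's proof proceeds quite differently and avoids this issue: it couples the Euler scheme with an Ornstein--Uhlenbeck-type scheme $\bar S$ driven by the $\xi_k$'s, uses the a.s.\ convergence of the empirical measures of the pair (Theorem \ref{principal} together with uniqueness) to rewrite both sides of the desired factorization as Ces\`aro limits along the scheme started at a deterministic point --- which \emph{is} adapted to ${\cal H}_\ell=\sigma(\xi_k,k\le\ell)$ --- and then shows the cross term vanishes by a martingale plus Kronecker-lemma argument. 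Your backward-coupling idea would yield a clean statement for a different, adapted realization of the stationary law, but as written it does not prove the proposition for the realization actually considered.
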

\begin{Remarque} For the sake of simplicity, we stated Proposition \ref{innovation-discrete} under $\mathbf{(H_{3,0})}$
which ensures uniqueness of the distribution of the stationary solution. However, adapting the arguments of the proof to subsequences, it could be possible to  show that, under $\mathbf{(H_1)}$ and $\mathbf{(H_{2})}$ only, there exists $\gamma_0>0$ such that for every $\gamma\le \gamma_0$, $a.s.$, every weak limit of ${\cal P}^{(n,\gamma)}(\omega,d\alpha)$ (as $n\rightarrow+\infty$) is the distribution of a stationary solution to~$\mathbf{(E_\gamma)}$   that satisfies the preceding independence property. Note that the same type of remark holds for Proposition \ref{innovation-continue} below.
\end{Remarque}
\begin{proof} 
It is enough to prove that for every integers $N_1$ and $N_2$ such that $N_2>N_1$, for every bounded continuous functions $H_1$ and $H_2$,
\begin{align}
\ES[H_1(\bar{X}_{0}^{(\infty,\gamma)},\ldots, \bar{X}_{\gamma N_1}^{(\infty,\gamma)}) H_2(&\xi_{N_1+1},\ldots,\xi_{N_2})]\nonumber\\&
=\ES[H_1(\bar{X}_{0}^{(\infty,\gamma)},\ldots, \bar{X}_{\gamma N_1}^{(\infty,\gamma)})]\ES[H_2(\xi_{N_1+1},\ldots,\xi_{N_2})].\label{egaamontrer1}
\end{align} 
In order to make use of our previous convergence results, we first write  $(\bar{X}_{k\gamma}^{(\infty,\gamma)},\xi_{k})_{k\ge 1}$ as a function of a stationary sequence $(\bar{X}_{\gamma k}^{(\infty,\gamma)},\bar{S}_{\gamma k}^{(\infty,\gamma)})_{k\ge0}$: we denote by $\mathbf{(\tilde{E}_\gamma)}$ the recursive equation on $\ER^{2}$ defined  for every $n\ge0$ by 
\begin{equation*}
\begin{cases}& \bar{X}_{(n+1)\gamma}-\bar{X}_{n\gamma}=\gamma b(\bar{X}_{n\gamma})+\sigma \Delta_{n+1}\\
&\bar{S}_{(n+1)\gamma}-\bar{S}_{n\gamma}=-\gamma\bar{S}_{n\gamma}+\xi_{n+1}.
\end{cases}
\end{equation*} 
We will also denote by $({\bar{X}}_{n\gamma}^x,{\bar{S}}_{n\gamma}^s)$ a solution to $\mathbf{(\tilde{E}_\gamma)}$ starting from a deterministic point $(x,s)$, by $(({\bar{X}}_{t}^x,{\bar{S}}_{t}^s))_{t\ge0}$ the induced stepwise constant process and by $(({\bar{X}}_{\gamma k+.}^s,{\bar{S}}_{\gamma k +.}^s)$ the $\gamma k$-shifted process. Now, one observes that Assumptions $\mathbf{(H_1)}$ and $\mathbf{(H_{3,0})}$ are satisfied on $\ER^{2}$ for $(X_t,S_t)$ with $\tilde{b}(x,s)=(b(x),-s)$ and $\tilde{\Delta}_n=(\Delta_n,\xi_n)$. 
Hence, by Theorem \ref{principal}.1 and the uniqueness induced by Assumption $\mathbf{(H_{3,0})}$, there exists $\gamma_0>0$ such that for every $\gamma\le \gamma_0$, $a.s.$, for every bounded continuous functional $F:\mathbb{D}(\ER_+,\ER^d)\rightarrow\ER$, 
\begin{equation}\label{conv-couple}
\frac{1}{n}\sum_{k=1}^n F\left(({\bar{X}}_{\gamma (k-1)+.}^x,{\bar{S}}_{\gamma (k-1) +.}^s\right)
\xrightarrow{n\rightarrow+\infty}\tilde{\cal P}^{(\infty,\gamma)}(F)
\end{equation}
where $\tilde{\cal P}^{(\infty,\gamma)}$ is the unique distribution of a (discretely) stationary solution that we denote by $(\bar{X}^{(\infty,\gamma)},\bar{S}^{(\infty,\gamma)})$. Thus, on the one hand, since $(\xi_k)_{k\in\{N_1+1,\ldots,N_2\}}$ is clearly a continuous function $G$ of
$ (\bar{S}^{(\infty,\gamma)}_k)_{0\le k\le N_2}$, we deduce that $a.s.$,
\begin{align}
 \ES[H_1(\bar{X}_{0}^{(\infty,\gamma)}&,\ldots, \bar{X}_{\gamma N_1}^{(\infty,\gamma)}) H_2(\xi_{N_1+1},\ldots,\xi_{N_2})]\nonumber\\
 &=
 \lim_{n\rightarrow+\infty}\frac{1}{n}\sum_{k=1}^n H_1(\bar{X}_{\gamma(k-1)}^x,\ldots,\bar{X}_{\gamma(k-1+N_1)}^x)H_2(G(\bar{S}_{\gamma(k-1)}^s),\ldots,\bar{S}_{\gamma(k-1+N_2)}^s))\nonumber\\
 &=\lim_{n\rightarrow+\infty}\frac{1}{n}\sum_{k=1}^n H_1(\bar{X}_{\gamma(k-1)}^x,\ldots,\bar{X}_{\gamma(k-1+N_1)}^x)H_2(\xi_{k-1+N_1},\ldots,\xi_{k-1+N_2}).\label{eq:cc1}
 \end{align}
On the other hand, by \eqref{conv-couple} and the fact that $(\xi_k)_{k\ge1}$ is a stationary sequence, we also have
\begin{align}\nonumber
 \ES[&H_1(\bar{X}_{0}^{(\infty,\gamma)},\ldots, \bar{X}_{\gamma N_1}^{(\infty,\gamma)})\ES[ H_2(\xi_{N_1+1},\ldots,\xi_{N_2})]\\
 &=\lim_{n\rightarrow+\infty}\frac{1}{n}\sum_{k=1}^n H_1(\bar{X}_{\gamma(k-1)}^x,\ldots,\bar{X}_{\gamma(k-1+N_1)}^x)\ES[H_2(\xi_{k-1+N_1},\ldots,\xi_{k-1+N_2})]\quad a.s.\label{eq:cc2}
 \end{align}
 Setting 
 $$\zeta(k,N_1,x)=H_1(\bar{X}_{\gamma k}^x,\ldots,\bar{X}_{\gamma(k+N_1)}^x)\quad\textnormal{and} \quad\Lambda(k,N_2)=H_2(\xi_{k+N_1},\ldots,\xi_{k+N_2}),$$
we deduce from \eqref{eq:cc1} and \eqref{eq:cc2} that \eqref{egaamontrer1} is true if 
\begin{equation}\label{eq:martcc}
\frac{1}{n}\sum_{k=1}^n\zeta(k-1,N_1,x)\left(\Lambda(k-1,N_2)-\ES[\Lambda(k-1,N_2)]\right)\xrn{n\nrn}0\quad a.s.
\end{equation}  
We use a martingale argument. Set ${\cal H}_{\ell}=\sigma(\xi_k,k\in\mathbb{Z},k\le \ell)$. Since $\Lambda(k,N_2)$ is a ${\cal H}_{k+N_2}$-measurable random variable independent of  ${\cal H}_{k+N_1}$ and that $\zeta(k,N_1,x)$ is ${\cal H}_{k+N_1}$-measurable, we can write:
$$\sum_{k=1}^n\frac{\zeta(k-1,N_1,x)}{k}\left(\Lambda(k-1,N_2)-\ES[\Lambda(k-1,N_2)]\right)=\sum_{\ell=N_1}^{N_2-1} M_n^\ell$$
where for every $\ell\in\{N_1+1,\ldots,N_2\}$, $(M_n^\ell)_{n\ge1}$ is a centered $({\cal H}_{n-1+\ell})_{n\ge1}$-adapted martingale defined by:
$$M_n^\ell=\sum_{k=1}^n\frac{\zeta(k-1,N_1,x)}{k}\left(\ES[\Lambda(k-1,N_2)/{\cal H}_{k+\ell}]-\ES[\Lambda(k-1,N_2)/{\cal H}_{k+\ell-1}]\right).$$
For every $\ell\in\{N_1+1,\ldots,N_2\}$,  $(M_n^\ell)_{n\ge1}$ is clearly bounded in $L^2$ since $H_1$ and $H_2$ are bounded. Thus, setting $a_k=1/k$ and 
$$b_k(\omega)=\zeta(k-1,N_1,x)\left(\ES[\Lambda(k-1,N_2)/{\cal H}_{k+\ell}]-\ES[\Lambda(k-1,N_2)/{\cal H}_{k+\ell-1}]\right),$$
we deduce that the serie $\sum a_k b_k(\omega)$ is $a.s.$ convergent and \eqref{eq:martcc} follows from the Kronecker lemma.
\end{proof}

\noindent\textbf{The continuous case.}  We assume in this part that, $Z$ admits the following representation:
\begin{equation}\label{rep-movingaverage}
Z_t=\int_{-\infty}^t f_t(s)dW_s \quad a.s. \quad \forall t\ge0.\end{equation}
where  $(W_t)_{t\in\ER}$ is a two-sided standard Brownian motion 
such that $ W_0=0$ 
and for every $t\in\ER$, $f_t\in L^2(\ER,\ER)$. 

Following Lemma \ref{lemme-verif-const} in the Appendix, the distribution of  a stationary solution $(X_t)_{t\ge0}$ 
to~\eqref{fractionalSDE} can be realized as $ \kappa_2(\mu_2),$  where $\mu_2$ denotes the probability measure on $\ER\times{\cal C}(\ER,\ER)$  defined  in Lemma \ref{lemme-verif-const} such that its projection on the second coordinate is ${\cal L}((W_t)_{t\in\ER}),$ and 
where $ \kappa_2$ is defined in~\eqref{kappa2}. In the next proposition, we show that the past of the stationary solution $(X_t)_{t\ge0}$  is independent to the future of the underlying innovation process of $Z$. 

\begin{prop}\label{innovation-continue} Assume $\mathbf{(H_1)}$ and $\mathbf{(H_{3,0})}$. Assume that  the representation \eqref{rep-movingaverage} holds for $Z$. Let ${\cal P}^\infty$ denote the unique distribution on ${\cal C}(\ER_+,\ER)$ of a stationary solution. Let $(X_t)_{t\ge0}$ denote the realization on $\ER\times{\cal C}(\ER,\ER)$ of ${\cal P}^\infty$  defined in Lemma \ref{lemme-verif-const}. Then, for every $t\ge0$, $\sigma(X_s,0\le s\le t)$ is independent of $\sigma(W_{s+t}-W_t,s\ge0)$.
\end{prop}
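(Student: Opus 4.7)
The plan is to show that, under the realization of Lemma \ref{lemme-verif-const}, one has the stronger inclusion $\sigma(X_s, 0\le s\le t)\subset\sigma(W_u,u\le t)$ for every $t\ge 0$. Since $(W_u)_{u\in\ER}$ is a two-sided standard Brownian motion, $\sigma(W_u,u\le t)$ is independent of $\sigma(W_{t+s}-W_t,s\ge 0)$ by the independent increments property, and this inclusion immediately implies the desired independence.

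\textbf{Propagation via the SDE.} By the moving-average representation \eqref{rep-movingaverage}, for every $u\ge 0$ both $Z_u$ and $Z_0$ belong to $\sigma(W_r,r\le u)$, hence so does $Z_u-Z_0$. The process $(X_s)_{s\ge 0}$ satisfies $X_s=X_0+\int_0^s b(X_r)\,dr+(Z_s-Z_0)$, and, since $b$ is Lipschitz continuous, Picard iteration for this integral equation expresses $X_s$ as a measurable function of $X_0$ and of the path $(Z_r-Z_0)_{0\le r\le s}$. Consequently, it suffices to establish the single statement $X_0\in\sigma(W_u,u\le 0)$: the SDE then propagates this measurability forward in time.

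\textbf{Measurability of $X_0$ via the discrete step.} To show $X_0\in\sigma(W_u,u\le 0)$, I would first obtain the analogous fact at the discrete level. For every $\gamma>0$ small enough, the innovation $\xi^\gamma_k$ of the sequence $(\Delta^\gamma_n)$ lies in the closed linear span of $(\Delta^\gamma_j,j\le k)$ and therefore in $\sigma(W_u, u\le k\gamma)$. Under $\mathbf{(H_{3,0})}$, the asymptotic confluence argument used in the proof of Lemma \ref{lemme4} shows that, starting the Euler scheme from an arbitrary deterministic value $x$ at time $-n\gamma$ and running it forward to time $0$ produces a random variable which is $\sigma(\xi^\gamma_j,-n<j\le 0)$-measurable and which converges in $L^2$ to $\bar X^{(\infty,\gamma)}_0$ as $n\to+\infty$. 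Hence $\bar X^{(\infty,\gamma)}_0\in\sigma(W_u,u\le 0)$ for every $\gamma$ sufficiently small.

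\textbf{Passage to the limit.} Finally one lets $\gamma\to 0$. Proposition \ref{propid2} together with the uniqueness provided by Proposition \ref{uniq} identifies every weak limit of $(\bar X^{(\infty,\gamma)})$ with the distribution of the unique stationary solution. Realized on the common canonical space of Lemma \ref{lemme-verif-const} driven by the same underlying two-sided Brownian motion $W$, the random initial values $\bar X^{(\infty,\gamma)}_0$ are, along a subsequence, made to converge in probability to $X_0$, and the $\sigma(W_u,u\le 0)$-measurability transfers to the limit. The main technical point, and the principal expected obstacle, is precisely this upgrade from convergence in distribution to convergence in probability on a common probability space: the discrete innovation filtrations depend on $\gamma$ and must be reconciled with the fixed white noise $W$, which is essentially the role played by Lemma \ref{lemme-verif-const}.
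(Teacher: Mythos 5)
Your strategy hinges on the claim that, for the realization of Lemma \ref{lemme-verif-const}, one has the stronger property $X_0\in\sigma(W_u,u\le 0)$, from which adaptedness of $(X_s)_{s\le t}$ to the past of $W$ and hence the independence would follow. This reduction is not valid for the realization the proposition is actually about. The construction in Lemma \ref{lemme-verif-const} takes the conditional law $\pi_2$ of $X_0$ given the driving path $(Z_t)_{t\ge0}$ and sets $\mu_2(dx,dW)=\pi_2({\cal R}_2(W),dx)\otimes P_{\bf W}$; thus, conditionally on the whole two-sided path $W$, the initial value $X_0$ is \emph{randomized} according to $\pi_2$, and nothing in the construction makes it a function of $(W_u)_{u\le 0}$. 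A concrete counterexample is the Markovian case $Z=W$, $b(x)=-x$: there $X_0$ is independent of $(Z_t)_{t\ge0}$, so $\pi_2$ is the (non-degenerate) invariant Gaussian law and the realization makes $X_0$ independent of the \emph{entire} two-sided $W$; in particular $X_0$ is not $\sigma(W_u,u\le 0)$-measurable, although the conclusion of the proposition holds trivially. So the statement you propose to prove is strictly stronger than the proposition and is false for the specified realization; it could at best hold for a different (pullback-type) realization, which is not the object of the statement. In addition, your discrete pullback step would require extending the stationary solution and the increments $(\Delta_n)$ to negative times and identifying the realized $\bar X^{(\infty,\gamma)}_0$ with the pullback limit, and your final passage from weak convergence to convergence in probability of $\bar X^{(\infty,\gamma)}_0$ towards $X_0$ on a common space is precisely the point you leave open; Lemma \ref{lemme-verif-const} does not supply it.

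The paper's proof avoids any measurability claim and establishes the independence directly as a factorization $\ES[H_1(X_s,0\le s\le t)H_2(W_{t+s}-W_t,0\le s\le T)]=\ES[H_1(\cdot)]\,\ES[H_2(\cdot)]$ for bounded Lipschitz functionals. It couples $X$ with an auxiliary Ornstein--Uhlenbeck process $dS_t=-S_tdt+dW_t$ so that the increments of $W$ are recovered as a continuous functional $\tilde\phi$ of $S$, checks that the pair satisfies $\mathbf{(H_1)}$ and $\mathbf{(H_{3,0})}$, approximates by the Euler schemes and uses Proposition \ref{propid2} to pass to the limit $\gamma\to0$, and reduces the discrete statement to Proposition \ref{innovation-discrete}, whose proof rests on the ergodicity of the shifted Euler scheme together with a martingale and Kronecker-lemma argument. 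If you want to keep the spirit of your approach, you would have to prove the conditional-independence statement in the form the paper does rather than through adaptedness to the past of $W$.
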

\begin{proof} Let $t\ge0$. It is enough to show that for every $T>0$,  for every bounded Lipschitz continuous functionals $H_1:{\cal C}([0,t],\ER^d)\mapsto\ER$ and $H_2:{\cal C}([0,T],\ER^d)\mapsto\ER$
\begin{align}
\ES[H_1(X_s,0\le s\le t) H_2&(W_{t+s}-W_t,0\le s\le T)]\nonumber\\&
=\ES[H_1(X_s,0\le s\le t)]\ES[H_2(W_{t+s}-W_t,0\le s\le T)].\label{egaamontrer}
\end{align} 
As in the proof of Proposition \ref{innovation-discrete}, we introduce
$(X_t,S_t)$ that is a stationary solution to 
\begin{equation*}
\begin{cases}& dX_t= b(X_t)dt+ dZ_t\\
&dS_t=-S_t dt+dW_t
\end{cases}
\end{equation*}
where $W$ denotes the underlying innovation process of the representation \eqref{rep-movingaverage}. Following the second part of the Appendix (see Lemma \ref{lemme-verif-const}), we can assume that $(X_t,S_t)$ is built on $\ER^2\times{\cal C}(\ER,\ER)$. Assumptions $\mathbf{(H_1)}$ and $\mathbf{(H_{3,0})}$ are satisfied on $\ER^{2}$ for $(X_t,S_t)$ with $\tilde{b}(x,s)=(b(x),-s)$ and $\tilde{Z}_t=(Z_t,W_t)$. Let $\gamma_0$ and $\gamma_1\in(0,\gamma_0)$ such that Theorem \ref{principal} holds for every $\gamma\le\gamma_1$. Denote by $\tilde{{\cal P}}^{\infty,\gamma}$ the (unique) stationary distribution of $(\bar{E}_\gamma)$. Denote by $(\bar{X}_t^{\infty,\gamma},\bar{S}_t^{\infty,\gamma})_{t\ge0}$ the stationary solution built on $\ER^2\times{\cal C}(\ER,\ER)$. Using that
$$(W_{t+s}-W_t,0\le s\le T)=(\tilde{\phi}(S_{s+t}))_{s\in[0,T]}$$
where $\tilde{\phi}(\alpha)$ is defined by
$$\tilde{\phi}(\alpha)_s=\alpha_s-\alpha_0+\int_0^s\alpha_u du,$$
we deduce from Proposition \ref{propid2} and from the continuity of $\tilde{\phi}$ for the uniform topology (on compact sets) that
\begin{align*}
\ES[H_1(X_s,0\le s\le t)H_2(W_{t+s}-&W_t,0\le s\le T)]\\&=\lim_{\gamma\rightarrow0}\ES[H_1(\bar{X}_s^{\infty,\gamma},0\le s\le t)H_2((\tilde{\phi}(\bar{S}_{s+t}^{\infty,\gamma})_{s\in[0,T]})].
\end{align*}
Now, setting $\xi_n=W_{n\gamma}-W_{(n-1)\gamma}$, we derive from the definition of the discretized equation that
$$(\tilde{\phi}(\bar{S}_{.+t}^{\infty,\gamma}))_s=\sum_{k=[\frac{t}{\gamma}]+1}^{[\frac{t+s}{\gamma}]}\xi_k+R_\omega(\gamma,t,s)$$
where
$$|R_\omega(\gamma,t,s)|=|(t+s-\underline{t+s}_\gamma)\bar{S}_{\underline{t+s}_\gamma}-(t-\underline{t}_\gamma)\bar{S}_{\underline{t}_\gamma}|\le 2\gamma \sup_{u\in[0,t+T]}|\bar{S}^{\infty,\gamma}_u|.$$
Since $(\bar{S}^{\infty,\gamma})_{\gamma\in(0,\gamma_1]}$ is tight for the uniform topology on compact sets, it follows that
$$\sup_{s\in[0,T]}|R_\omega(\gamma,t,s)|\xrn{\gamma\rightarrow0}0\quad\textnormal{in probability.}$$
Thus, using that $H_2$ is Lipschitz continuous, we deduce that
\begin{align*}
\ES[H_1(X_s,0\le s\le t)H_2(W_{t+s}-&W_t,0\le s\le T)]\\&=\lim_{\gamma\rightarrow0}\ES[H_1(\bar{X}_s^{\infty,\gamma},0\le s\le t)H_2((\sum_{k=[\frac{t}{\gamma}]+1}^{[\frac{t+s}{\gamma}]}\xi_k)_{s\in[0,T]})].
\end{align*}
\noindent As well,
$$\ES[H_1(X_s,0\le s\le t)]=\lim_{\gamma\rightarrow0}\ES[H_1(\bar{X}_s^{\infty,\gamma},0\le s\le t)]$$ 
and,
$$\ES[H_2(W_{t+s}-W_t,0\le s\le T)]=\lim_{\gamma\rightarrow0}\ES[H_2((\sum_{k=[\frac{t}{\gamma}]+1}^{[\frac{t+s}{\gamma}]}\xi_k)_{s\in[0,T]})].$$
By the previous convergences, it is now enough to show that for $\gamma$ sufficiently small, 
\begin{align*}
\ES[H_1(\bar{X}_s^{\infty,\gamma},\,&0\le s\le t)H_2((\sum_{k=[\frac{t}{\gamma}]+1}^{[\frac{t+s}{\gamma}]}\xi_k)_{s\in[0,T]})]\\&
=\ES[H_1(\bar{X}_s^{\infty,\gamma},\,0\le s\le t)]\ES[H_2((\sum_{k=[\frac{t}{\gamma}]+1}^{[\frac{t+s}{\gamma}]}\xi_k)_{s\in[0,T]})].
\end{align*}
The sequel of the proof follows the lines of that of Proposition \ref{innovation-discrete}. We leave it to the reader.
\end{proof}
\section{Appendix}
In the Appendix we give the proof of two technical results.
\label{appendix}
\begin{prop} \label{propcontrolesup}Assume that $(Z_t)_{t\ge0}$ satisfies $\mathbf{(H_1)}$. Then, for every $T>0$, for every $r>0$, $\ES[\sup_{t\in[0,T]}|Z_t|^r]<+\infty$. Moreover, there exist $p\ge1$ and $T_0>0$ such that for every
$T\le T_0$,
\begin{equation*}
\ES[\sup_{t\in[0,T]}|Z_t|^{p}]\le C T^{1+\rho}\quad\textnormal{with $\rho>0$.}
\end{equation*} 
\end{prop}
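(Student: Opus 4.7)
The plan is to deduce H\"older regularity of each sample path from the Gaussianity of $Z$ combined with the near-zero behaviour of the functions $c_i$, and to apply a dyadic chaining argument while keeping track of the dependence in $T$.

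First, I would use the consequence~\eqref{consequence-cov-bounds} of $\mathbf{(H_1)}$ to observe that there exists $\eta>0$ such that $c_i(u)\le C\,u^{\eta}$ for every $u\in[0,1]$ and every $i$ (in the critical case $a_i=1$ one uses $t\log(1/t)\le C_\delta t^{1-\delta}$ for any small $\delta>0$). Since $Z_t^i-Z_s^i$ is a centered Gaussian random variable with variance $c_i(|t-s|)$, this immediately gives, for any integer $p\ge1$,
\[
\ES\bigl[|Z_t^i-Z_s^i|^p\bigr]=M_p\,c_i(|t-s|)^{p/2}\le C_p\,|t-s|^{p\eta/2}\qquad(|t-s|\le 1),
\]
and summation over coordinates yields $\ES[|Z_t-Z_s|^p]\le C\,|t-s|^{1+\beta}$ on $\{|t-s|\le1\}$, where $\beta:=p\eta/2-1$.

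Next, I would fix $p$ large enough that $\beta>0$ and perform a dyadic chaining on $[0,T]$: at scale $k\ge0$ there are $2^k$ increments of length $2^{-k}T$, so
\[
\ES\Bigl[\max_{0\le i<2^k}|Z_{(i+1)2^{-k}T}-Z_{i2^{-k}T}|^p\Bigr]\le 2^k\cdot C\,(2^{-k}T)^{1+\beta}=C\,2^{-k\beta}\,T^{1+\beta}.
\]
Taking $p$th roots and summing geometrically in $k$ (Minkowski), then using the almost sure continuity of the sample paths to extend the supremum from dyadic points to all of $[0,T]$, one obtains for $T\le 1$
\[
\ES\Bigl[\sup_{s,t\in[0,T]}|Z_t-Z_s|^p\Bigr]\le C\,T^{1+\beta}.
\]
Since $Z_0=0$, this dominates $\ES[\sup_{t\in[0,T]}|Z_t|^p]$, yielding the quantitative part of the proposition with $\rho=\beta>0$ and $T_0=1$.

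For the first assertion---finite moments of all orders for arbitrary $T>0$ and arbitrary $r>0$---I would take $p\ge r$ as above, split $[0,T]$ into unit sub-intervals, and use stationarity of the increments of $Z$ to write
\[
\sup_{t\in[0,T]}|Z_t|\le\sum_{k=0}^{\lfloor T\rfloor}\Bigl(|Z_k|+\sup_{s\in[0,1]}|Z_{k+s}-Z_k|\Bigr),
\]
where each $\sup_{s\in[0,1]}|Z_{k+s}-Z_k|$ is equal in law to $\sup_{s\in[0,1]}|Z_s|$, which has finite $p$th moment by the previous step, and each $|Z_k|$ is Gaussian with finite moments; Minkowski's inequality concludes. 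The only delicate point is to keep the optimal $T$-dependence through the chaining: the classical Kolmogorov continuity theorem only produces an a.s.\ H\"older constant with finite $p$th moment independent of $T$, which gives $T^{\alpha p}$ for some $\alpha<\beta/p$ rather than $T^{1+\beta}$. The sharper estimate requires the explicit dyadic bookkeeping above, which is standard but must be carried out carefully to recover the exponent $1+\rho>1$ needed in Step 1 of the proof of Proposition~\ref{propid2}.
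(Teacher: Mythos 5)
Your argument is correct, but it proceeds by a genuinely different route from the paper. You deduce from \eqref{consequence-cov-bounds} a power bound $c_i(u)\le Cu^{\eta}$ near $0$, convert it into the moment estimate $\ES[|Z_t-Z_s|^p]\le C|t-s|^{p\eta/2}$, and then run an explicit dyadic (Kolmogorov-type) chaining, keeping the $T$-dependence through the geometric sum to get $\ES[\sup_{t\in[0,T]}|Z_t|^p]\le CT^{1+\rho}$ for $p$ large and $T\le1$; general $T$ and general $r$ then follow by splitting into unit intervals, the (automatic, by polarization of $\ES[(Z_t-Z_s)^2]=c(t-s)$) stationarity in law of the increment process, and Minkowski. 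The paper instead bounds $\ES[\sup_{t\in[0,T]}|Z_t|]$ via Dudley's entropy integral, using $\bar c(\delta)\le C\delta^{\mu}$ to estimate the covering numbers $N(T,u)$, which yields $\ES[\sup_{t\in[0,T]}|Z_t|]\le CT^{\mu/2}|\log T|^{1/2}$ for small $T$, and then upgrades this first-moment bound to all moments $r>0$ (and to the $T^{\mu p/2}|\log T|^{p/2}$ estimate) by Gaussian concentration (Corollary 3.2 of Ledoux--Talagrand), finally choosing $p$ large so that $\mu p/2>1$. What each approach buys: the paper's route is shorter once the two classical Gaussian tools are invoked, gives all moments $r>0$ for every $T$ in one stroke, and retains the sharper logarithmic factor; your route is more elementary and self-contained (only Gaussian moment scaling plus chaining, no entropy integral or concentration inequality), at the price of handling the case $a_i=1$ by sacrificing an arbitrarily small exponent and of an extra subdivision step for large $T$ and non-integer $r$. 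Both deliver exactly the exponent $1+\rho>1$ needed in Step 1 of the proof of Proposition \ref{propid2}, with comparable values of $\rho$, and your passage from dyadic points to $[0,T]$ is legitimate since the a.s.\ continuity of the paths is already recorded in the paper as a consequence of $\mathbf{(H_1)}$.
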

\begin{proof}
First, note that it is enough to prove the result for every coordinate $Z^j$ with $j\in\{1,\ldots,\ell\}$. Therefore, it is in fact enough to prove that
the results are true for  any one-dimensional centered Gaussian process with stationary increments and variance function $(c(t))_{t\ge0}$  satisfying $\mathbf{(H_1)}$. Then, for every $t>0$ and $\varepsilon>0$,  $c(t)=\ES[Z_t^2]$ 
 and denote by ${\cal D}(T,\varepsilon)$ the Dudley integral defined by
$${\cal D}(T,\varepsilon)=\int_0^\varepsilon (\log(N(T,u))^{1/2}du,$$
where, for $ u >0,$ 
$$N(T,u)=\inf\{n\ge1, \exists \; s_1,\ldots, s_n \textnormal{ such that $\forall t\in [0,T]$, $\exists i\in\{1,\ldots,n\}$ with $\sqrt{c(t-s_i)}\le u$}\}.$$
By the Dudley Theorem (see $e.g.$ Theorem 1 of \cite{lifshits} p 179), for every $T>0$,
\begin{equation}\label{dudley}
\ES[\sup_{t\in[0,T]}|Z_t|] \le 2 \ES[\sup_{t\in[0,T]}Z_t]\le C{\cal D}(T,\sqrt{\bar{c}(T)})\quad\textnormal{with }\bar{c}(T)=\sup_{t\in[0,T]}c(t).
\end{equation} 
Let us control the right-hand member. By assumption $\mathbf{(H_1)}$ and \eqref{consequence-cov-bounds}, for every $\delta\in(0,1)$,
$$ \bar{c}(\delta)\le C \delta^{\mu}\quad\textnormal{where $C$ does not depend on $\delta$ and}$$
$\mu\in(0,1]$ (depending on the value of $a_1$). It follows that, for
$u >0,$  
$$N(T,u)\le C T u^{2/\mu},$$
where $C$ does not depend on $T$. 
For  $\varepsilon>0$ small enough, 
\begin{align*}
{\cal D}(T,\varepsilon)&\le \int_0^\varepsilon  |\log(CT) + \frac2{\mu} \log(u)|^{1/2} du
\le C \varepsilon |\log(\varepsilon)|^{1/2}.
\end{align*}
It follows from \eqref{dudley} that there exists $T_0 >0$ such that for $ T \le T_0$
\begin{align*}
\ES[\sup_{t\in[0,T]}|Z_t|]&\le C (\bar{c}(T))^{1/2}|\log(\bar{c}(T))|^{1/2}\\
                          &\le C T^{\mu/2}|\log(T)|^{1/2}.
\end{align*}
Then by Corollary 3.2 in~\cite{ledouxtal} $ \ES[\sup_{t\in[0,T]}|Z_t|^r] < + \infty $ 
for every $T >0 $ and every $r >0$ and 
$$ \ES[\sup_{t\in[0,T]}|Z_t|^p]\le C T^{\mu p/2}|\log(T)|^{p/2},$$
for  $ T \le  T_0.$ One can choose $p$ big enough to prove the second inequality
in the Proposition. 
\end{proof}
 In this last part of the Appendix, we specify the constructions of the stationary solutions when $(\Delta_n)_{n\ge1}$ and $(Z_t)_{t\ge0}$ admit the representations \eqref{rep-discrete} and \eqref{rep-movingaverage} respectively. 
Let us first denote by $P_{\xi}$ the distribution of the innovation sequence $  \xi=(\xi_n)_{n\in\mathbb{Z}}$ 
and by $P_{ \bf W}$ the Wiener measure where $  {\mathbf W}= (W_t)_{t\in\ER} $ is a two-sided 
 standard Brownian motion.

 Let $\kappa_1:\ER\times\ER^\mathbb{Z}\rightarrow\ER^{\mathbb{N}}$ and 
$\kappa_2:\ER\times{\cal C}(\ER,\ER)\rightarrow{\cal C}(\ER_+,\ER)$ be respectively 
defined by:
$(\kappa_1(x,(\xi_n)_{n\in\mathbb{Z}}))_0=x$, $(\kappa_2(x,(W_t)_{t\in\ER})_0=x$, 
\begin{equation}
\label{kappa1}
 (\kappa_1(x,(\xi_n)_{n\in\mathbb{Z}}))_{n+1}=(\kappa_1(x, \xi))_n+\gamma b((\kappa_1(x,\xi))_n)+\sum_{k=0}^{+\infty} a_{k,\gamma} \xi_{n-k} \quad \textnormal{for every $n\ge0$}, 
 \end{equation}
 and,
 \begin{equation}
\label{kappa2}
(\kappa_2(x,(W_t)_{t\in\ER})_t=x+\int_0^t b(\kappa_2(x,{\bf W}))_sds+\int_{-\infty}^t f_t(s)dW_s.
\end{equation}
Note that $\kappa_2$ is well-defined $ \mu-a.s.$ for all measure $\mu $ such that $ P_2\mu_2 =P_{ \bf W}$ 
(where $P_2$ denotes the projection of the probability on the second coordinate)
 since $b$ is a Lipschitz continuous function. We are now ready to state that there exist $\mu_1$ and $\mu_2$ 
such that the stationary solutions to the discretized and continuous equations (when they exist) can be realized on $(\ER\times\ER^{\mathbb{Z}},\mu_1)$ and $(\ER\times{\cal C}(\ER,\ER),\mu_2)$ respectively.
\begin{lemme} \label{lemme-verif-const} (i) Assume that $(\Delta_n)_{n\ge1}$ admits the representation \eqref{rep-discrete}. Let ${\cal P}$ be a probability on $\ER^{\mathbb{N}}$
that denotes the  distribution of a stationary solution to the discrete recursive equation 
$$\bar{X}_{(n+1)\gamma} -\bar{X}_{n\gamma}=\gamma b(\bar{X}_{n\gamma})+\Delta_{n+1}\quad \forall n\ge0.$$
Then, there exists a probability $\mu_1$ on $\ER\times\ER^{\mathbb{Z}}$ with $P_2\mu_1=P_{  \xi} $ such that  $\kappa_1(\mu_1)= \cal P.$\\
(ii) Assume that $(Z_t)_{t\ge0}$ admits the representation \eqref{rep-movingaverage}. Let ${\cal P}$ be a probability on ${\cal C}(\ER_+,\ER)$ that denotes the distribution of a stationary solution to \eqref{fractionalSDE}. Then, there exists a probability $\mu_2$ on $\ER\times{\cal C}(\ER,\ER)$ with $P_2\mu_2= P_{ \bf W}$ such that $\kappa_2(\mu_2)$ is a stationary solution to \eqref{fractionalSDE} with distribution ${\cal P}$.
\end{lemme}
\begin{proof} (i) First, let $\phi_1:\ER^{\mathbb{N}}\rightarrow\ER\times\ER^{\mathbb{N}^*}$ be defined by
$$\phi_1((x_n)_{n\ge0})=(x_0,(\delta_n)_{n\ge1})\quad \textnormal{with}\quad \delta_n:=x_{n}-x_{n-1}-\gamma b(x_{n-1}).$$
By construction $\nu_1:=\phi_1({\cal P})$ defines a probability on $\ER\times\ER^{\mathbb{N}^*}$ with $P_2\nu_1={\cal L}((\Delta_n)_{n\ge1})$ and such that if $(X_0,(\Delta_n)_{n\ge1})$ has distribution $\nu_1$, then the induced Euler scheme has distribution ${\cal P}$. Let 
$\pi_1(.,dx)$ denotes the conditional distribution of $X_0$ given $(\Delta_n)_{n\ge1}$. Let ${\cal R}:\ER^{\mathbb{Z}}\rightarrow\ER^{\mathbb{N}^*}$ denote the function defined by ${\cal R}((\xi_n)_{n\in\mathbb{Z}})=(\sum_{k=0}^{+\infty} a_{k,\gamma} \xi_{n-k})_{n\ge1}$. Then, define $\mu_1$ on $\ER\times\ER^{\mathbb{Z}}$ by
\begin{equation}
  \label{eq:mu1}
 \mu_1(dx,d \xi)=\pi_1({\cal R}( \xi,dx))\otimes P_{  \xi}(d  \xi).
\end{equation}

Setting $\tilde{\cal R}(x, \xi)=(x,{\cal R}( \xi))$, it follows from the very definition of $\mu_1$ that $\tilde{\cal R}(\mu_1)=\nu_1$. 
As a consequence, $\kappa_1(\mu_1)={\cal P}$. This completes the proof.\\
(ii) The proof of the second point is similar and is left to the reader.
 \end{proof}


\end{document}